\DeclareMathOperator\C{\mathbb C}
\DeclareMathOperator\Z{\mathbb Z}
\newcommand{\Disk}{\mathbb D}
\newcommand{\Om}{\Omega}
\newtheorem{theorem}{Theorem}[section]
\newtheorem{lemma}[theorem]{Lemma}
\newtheorem{cor}[theorem]{Corollary}
\newtheorem{prop}[theorem]{Proposition}
\theoremstyle{definition}
\theoremstyle{remark}
\newtheorem{remark}[theorem]{Remark}
\newcommand{\dontprint}[1]\relax
\newcommand{\bos}{\operatorname{bos}}
\newcommand{\La}{\Lambda}
\renewcommand{\P}{{\mathbb P}}
\newcommand{\A}{{\mathbb A}}
\newcommand{\wt}{\widetilde}
\newcommand{\ot}{\otimes}
\newcommand{\SL}{\operatorname{SL}}
\renewcommand{\AA}{{\mathcal A}}
\newcommand{\TT}{{\mathcal T}}
\newcommand{\DD}{{\mathcal D}}
\newcommand{\CC}{{\mathcal C}}
\newcommand{\HH}{{\mathcal H}}
\newcommand{\OO}{{\mathcal O}}
\renewcommand{\SS}{{\mathcal S}}
\newcommand{\de}{\delta}
\newcommand{\sub}{\subset}
\newcommand{\ov}{\overline}
\newcommand{\om}{\omega}
\newcommand{\la}{\lambda}
\renewcommand{\a}{\alpha}
\renewcommand{\b}{\beta}
\newcommand{\id}{\operatorname{id}}
\renewcommand{\th}{\theta}
\newcommand{\coker}{\operatorname{coker}}
\newcommand{\hra}{\hookrightarrow}
\newcommand{\De}{\Delta}
\newcommand{\pa}{\partial}
\numberwithin{equation}{section}
\title{On supercurves of genus $1$ with an underlying odd spin structure}
\author{Alexander Polishchuk}
\thanks{Partially supported by the NSF grant DMS-2001224, 
and within the framework of the HSE University Basic Research Program}
\address{
    Department of Mathematics, 
    University of Oregon, 
    Eugene, OR 97403, USA; and National Research University Higher School of Economics, Moscow, Russia
  }
  \email{apolish@uoregon.edu}
\begin{document}

\begin{abstract} We study the standard family of supercurves of genus $1$ with an underlying odd spin structures. We give a simple algebraic description of
this family and of the compactified family of stable supercurves with one Neveu-Schwarz puncture.
We also describe the Gauss-Manin connection on the $1$st de Rham cohomology of this family,
and compute the superperiods of global differentials.
\end{abstract}

\maketitle

\section{Introduction}

The moduli spaces of supercurves (also known as SUSY curves) is an important geometric ingredient
for the superstring perturbation theory (see e.g., \cite{Witten}). 
Recall that a supercurve over a superscheme $S$ is a smooth
proper morphism $\pi:X\to S$ of relative dimension $1|1$, together with a relative distribution $\DD\sub \TT_{X/S}$ of rank $0|1$, which is everywhere
non-integrable, i.e., the supercommutator induces an isomorphism $\DD\ot \DD\rTo{\sim} \TT_{X/S}/\DD$. We refer to \cite{DW}, \cite{FKP}, \cite{LB-R} 
for background on supercurves.
The moduli space of supercurves, possibly with some Neveu-Schwarz punctures (i.e., sections $S\to X$), 
has two connected components corresponding to whether the underlying spin structure is {\it even} or {\it odd}.

We refer to \cite{FR} and \cite{Levin1} for an overview of the genus $1$ case.
In this paper we consider the case of supercurves of genus $1$ with an underlying odd spin structure (this means that the underlying spin bundle on an elliptic curve 
is trivial). For every $\tau$ in the upper half-plane, there is a corresponding supercurve $X_\tau\to \C^{0|1}$ of genus $1$ with the underlying elliptic curve $\C/(\Z+\Z\tau)$
equipped with the trivial spin structure. In fact, there is a standard family $X\to S$ inducing all these supercurves, where $S$ is the product of the upper half-plane with $\C^{0|1}$,
obtained as the quotient of the relative $\C^{1|1}$ (see Sec.\ \ref{stand-sec}). This family is equipped with a natural Neveu-Schwarz (NS) puncture.
Complementing known results on such supercurves (see \cite{Levin}, \cite{FR}, \cite{Rabin}), we determine
the algebra $\OO(X_\tau-p)$, where $p\sub X_\tau$ is a natural puncture. We show that this leads to a simple algebraic description of $X_\tau$ as a blow-up
of a cubic in the superprojective plane $\P^{2|1}$ deforming the classical Weierstrass model (see Theorem \ref{algebra-thm}).
Furthermore, we use this model to extend this family to a family $\ov{X}\to \ov{S}$ of stable supercurves with one NS puncture. 
We compute the Kodaira-Spencer map for the extended family
proving that the corresponding map $\ov{S}\to \ov{\SS}^{odd}_{1,1}$ is an isomorphism on tangent spaces (see Theorem \ref{stable-thm}).

The second topic of this paper is a calculation of the Gauss-Manin connection.
It is a classical fact that the cohomology group $H^1_{dR}(C)$ of a smooth projective curve over $\C$ can be computed as 
the space of meromorphic differentials of the second kind modulo the exact ones (see \cite{AH}) and the Gauss-Manin connection can be calculated
in these terms (see \cite{Manin-conn}). In particular, for a point $p\in C$,
this cohomology group can be described as the quotient $\om_C(C-p)/d(\OO(C-p))$. 

We present a similar isomorphism for smooth proper supercurves
and describe explicitly the Gauss-Manin connection in these terms.
More precisely, given a section $p\sub X$ of $\pi:X\to S$, we construct an isomorphism (assuming $S$ is affine) of the relative de Rham cohomology $\HH^1_{X/S}$ with
$\om_{X/S}(X-p)/\de(\OO(X-p))$,
where $\om_{X/S}$ is the relative Berezinian line bundle 
and 
$$\de:\OO_X\to \om_{X/S}$$
is the canonical derivation associated with $\DD\sub \TT_{X/S}$ (see Proposition \ref{H1-prop}).
We then give a recipe for computing the Gauss-Manin connection on $\HH^1_{X/S}$ in these terms (see Proposition \ref{GM-recipe-prop}).
Using this recipe, we compute explicitly the Gauss-Manin connection on the de Rham cohomology space $\HH^1_{X/S}$ 
for the standard family $\pi:X\to S$ of genus $1$ supercurves with an odd underlying spin-structure (see Theorem \ref{GM-thm}). We also compute the superperiods of the global differentials for this family, i.e., the map $\pi_*\om_{X/S}\to \HH^1_{X/S}$ (see Corollary \ref{per-cor}).

\bigskip

\noindent
{\it Acknowledgment}.
This paper is an offshoot of a joint ongoing project with Giovanni Felder and David Kazhdan and moduli of supercurves (its results will be used in the study of superperiods
near one of the boundary components of the moduli of supercurves of genus $2$). 
I am grateful to them for inspiration and useful discussions.

\bigskip

\noindent
{\it Conventions}.
We mostly work in the complex analytic category. The general results about cohomology and Gauss-Manin connection in Section \ref{coh-GM-sec}
also hold algebraically over a field of characteristic $0$.

There are two different sign conventions for the tensor category of graded superspaces (see \cite[Sec.\ 1.2.8]{DM}). We use the Deligne's convention that
the sign appearing when swapping two symbols (that have both the degree and the parity)
is the product of the signs for the degree and for the parity. Further, we consider the de Rham differential as being even and of degree $1$.
Thus, for a pair of functions $a$, $b$ (even or odd), we have $d(ab)=da\cdot b+a\cdot db$. 
%so that the parity of $d$ is even but its degree is $1$, the sign is the product of two signs (refer to \cite{DM}), 
The structure even derivation $\de:\OO_X\to\om_{X/S}$ of a supercurve
in superconformal coordinates $(z,\th)$ is given by 
$$\de(f)=[dz|d\th]\cdot D(f),$$
where $D=\pa_\th+\th\pa_z$ (this differs from the convention in \cite{Deligne-letter}). Thus, we have $\de(z)=\de(\th)\cdot \th$.
The corresponding $\OO_X$-linear morphism $\varphi:\Om^1_{X/S}\to \om_{X/S}$ is given by
$$\de(df\cdot g)=\de(f)g=[dz|d\th]\cdot D(f)g.$$
In particular, $\ker(\varphi)\sub\Om^1_{X/S}$ is spanned by $dz-d\th\cdot\th=dz+\th d\th$.
%$\de(d\th)=s$, $\de(dz)=s\th$, so $\de(dz-d\th\cdot\th)=0$.

\section{Supercurves of genus $1$ with an underlying odd spin structure}\label{g1-sec}

\subsection{Weierstrass functions}

Here we collect some formulas related to Weierstrass functions that we will use.
We consider the lattice $\La_\tau$ in $\C$ generated by $(1,\tau)$, where $\operatorname{Im}(\tau)>0$, and consider the corresponding
Weierstrass functions $\zeta(z,\tau)=\zeta(z,\La_{\tau})$ and $\wp(z,\tau)=-\zeta'(z,\tau)$, where we denote by prime the derivative $\partial_z$.
Recall that 
$$\zeta(z+1,\tau)=\zeta(z,\tau)+\eta_1(\tau), \ \ \zeta(z+\tau,\tau)=\zeta(z,\tau)+\eta_2(\tau),$$
%Let $\zeta(z,\tau)$ be the Weierstrass zeta-function associated with $(\om_1=1,\om_2=\tau)$.
where 
$$\tau\eta_1(\tau)-\eta_2(\tau)=2\pi i$$
(Legendre's relation), and
$$\eta_1(\tau)=-(2\pi i)^2\cdot \frac{E_2(q)}{12},$$
for $q=\exp(2\pi i\tau)$, where 
$$E_2(q):=1-24\sum_{n\ge 1}\frac{nq^n}{1-q^n},
$$
(see \cite[Sec.\ 1.2]{katz}\footnote{Katz uses the opposite sign in the definition of $\eta_1(\tau)$ and $\eta_2(\tau)$}).

Set 
$$\zeta_1(z,\tau):=(-2\pi i)^{-1}(\zeta(z,\tau)-\eta_1(\tau)z)$$
Then $\zeta'_1(z,\tau)=(2\pi i)^{-1}(\wp(z,\tau)+\eta_1(\tau))$, and from the Legendre's relation we get
\begin{equation}\label{zeta1-eq}
\zeta_1(z+1,\tau)=\zeta_1(z,\tau), \ \ \zeta_1(z+\tau,\tau)=\zeta_1(z,\tau)+1.
\end{equation}

Let $\a$ and $\b$ be the cycles on the elliptic curve $\C/\La_\tau$ given as images of the straight segments from $z_0$ to $z_0+1$ and from $z_0$ to $z_0+\tau$.
Then the periods of the meromorphic differentials of the second kind, $dz$ and $\zeta'_1(z,\tau)dz$, on $\C/\La_\tau$ are
$$\int_{\a} dz=1, \ \ \int_{\b} dz=\tau, \ \  \int_{\a}\zeta'_1(z,\tau)dz=0, \ \ \int_{\b}\zeta'_1(z,\tau)dz=1.$$
% \int_{\a}\wp(z,\tau)dz=-\eta_1(\tau),\int_{\b} \wp(z,\tau)dz=2\pi i -\tau\cdot\eta_1(\tau)
Hence, the normalized differentials of the second kind, 
\begin{equation}\label{e0-f0-eq}
e_0:=(1-\tau\cdot \zeta'_1(z,\tau))dz, \ \ f_0:=\zeta'_1(z,\tau)dz,
\end{equation}
give a basis of $H^1(\C/\La_\tau,\C)$, dual to $(\a,\b)$.

\subsection{Standard family of cupercurves genus $1$ with an underlying odd spin structure}\label{stand-sec}

Consider the family of supercurves $X\to S$ over the domain $S\sub\C^{1|1}$ with coordinates $(\tau,\varphi)$, defined by $\operatorname{Im}(\tau)>0$,
defined as the quotient of $\C^{1|1}\times S$ by the $\Z^2$-action
\begin{equation}\label{main-transformation}
(z,\th)\mapsto (z+1,\th), \ \ (z,\th)\mapsto (z+\tau+\th\varphi,\th+\varphi).
\end{equation}
This action preserves the vector field $D=\partial_\th+\th\partial_z$, and we equip $X$ with the corresponding superconformal structure.
For each $\tau$, we denote by $X_{\tau}$ the corresponding supercurve over $\C^{0|1}$ (with the coordinate $\varphi$).

Note that we have an NS-puncture $p:S\to X$ coming from the natural NS-puncture of $\C^{1|1}$ at the origin given by the ideal $(z,\th)$.

%\begin{lemma}\label{reg-fun-lem}
%Any regular function on $X_{\tau}$ is of the form $a+b\th\varphi$, where $a\in \C[\varphi]$, $b\in \C$.
%\end{lemma}

%\begin{proof} Write $f=f_0(z)+\thf_1(z)

The family of supercurves $X\to S$ can be viewed as a homogeneous variety for a noncommutative algebraic supergroup over $S$ (see \cite[Sec.\ 3]{Rabin}).
Namely, let us consider the supergroup scheme $G$, smooth of dimension $1|1$ over $\Z$, defined as follows.
For a superring $A=A_0\oplus A_1$, the group of points $G(A)$ is the group $A_0\oplus A_1$ with the group law
$$(a_0,a_1)\cdot (a'_0,a'_1)=(a_0+a'_0+a_1a'_1,a_1+a'_1).$$
Note that $G\simeq \A^{1|1}$, and as a supergroup it is a central extension of the odd additive group by the even additive group.
Let $G_S$ denote the corresponding constant group over $S$.
Then $X$ can be identified with the quotient $G_S/(\Z^2_S)$, where the subgroup $\Z^2_S\sub G_S$ is generated by the sections
$(1,0),(\tau,\varphi)$ in $G_S(S)$. The superconformal structure on $X$ is induced by the standard superconformal structure on $\A^{1|1}$.
 
\begin{remark}
Let us give a description of an equivalent family of supercurves in terms of an arbitrary lattice, so it will be manifestly $\SL(2,\Z)$-equivariant. 
Starting with a lattice $\La\sub\C$, we consider the
corresponding Weierstrass zeta function $\zeta(z,\La)$ satisfying
$$\zeta(z+\la,\La)-\zeta(z,\La)=\eta(\la,\La)$$ 
for $\la\in \La$. Then we can replace the $\Z^2$-action \eqref{main-transformation} by the following action of $\La$:
$$(z,\th)\mapsto (z+\la+\eta(\la,\La)\cdot \th\varphi,\th+\eta(\la,\La)\cdot\varphi),$$
where $\la\in\La$. In other words, we consider the quotient $G_{\varphi}/\La$, where $G_{\varphi}=G\times \A^{0|1}_{\varphi}$, with respect
to the embedding
$$\iota_{\La,\varphi}:\La\to G_{\varphi}: \la\mapsto (\la,\eta(\la,\La)\varphi).$$
%Then $D=\pa_\th+\th\pa_z$ still gives a superconformal structure on the quotient.
%The functions $\Psi_1$, $\Psi_2$ and $R$ have the following analogs for this family:
%$$\Psi_1(\La)=\th-\eta\zeta(z,\La), \ \ \Psi_2=\eta\cdot \DD\zeta(z,\La)+\th \zeta'(z,\La), \ \ R=\wp(z,\La)+\th\eta\cdot \DD\wp(z,\La),$$
%where for $\La=\Z\om_1+\Z\om_2$ we use the Halphen-Fricke operator
%$$\DD=\eta(\om_1,\La)\partial_{\om_1}+\eta(\om_2,\La)\partial_{\om_2}.$$
%???
The equivalence of the above family for $\La_{\tau}=\Z+\Z\tau$ with the family corresponding to \eqref{main-transformation} is given by the coordinate change
$$\varphi=-(2\pi i)\varphi_1, \ \ z=(1+\eta(1,\La_\tau)\th\varphi)z_1, \ \ \th=\th_1+\eta(1,\La_\tau)\varphi z_1.$$

The $\SL(2,\Z)$-equivariance now easily follows by combining the identity $\eta(c\la,c\La)=c^{-1}\eta(\la,\La)$ for $c\in\C^*$ with the superconformal automorphisms
$$A_{c^{1/2}}:(z,\th)\mapsto (cz,c^{1/2}\th)$$ 
of $G=\A^{1|1}$. For example, we have $\La_{-\tau^{-1}}=\tau^{-1}\La_{\tau}$, and the commutative diagram
\begin{diagram}
\La_{\tau}&\rTo{\iota_{\La_{\tau},\varphi}}& G_{\varphi}\\
\dTo{\tau^{-1}\cdot}&&\dTo{A_{\tau^{-1/2}}}\\
\La_{-\tau^{-1}}&\rTo{\iota_{\La_{-\tau^{-1}},\tau^{-3/2}\varphi}}& G_{\varphi}
\end{diagram}
explains the modular transformation 
$$(\tau,\varphi)\mapsto (-\tau^{-1},\tau^{-3/2}\varphi)$$
(see \cite[Sec.\ 4]{FR}). The presence of a square root of $\tau$ corresponds to the fact that one should replace $\SL(2,\Z)$ by a double covering $\widehat{\SL}(2,\Z)\to\SL(2,\Z)$ 
(see \cite{Levin1}). The central subgroup $\Z/2\sub \widehat{\SL}(2,\Z)$ acts by $(\tau,\varphi)\mapsto (\tau,-\varphi)$.
\end{remark}

\subsection{Functions on the affine supercurve}\label{g1-fun-sec}

Following \cite{FR}, set
$$R=R(x,\th,\tau,\varphi):=\wp(z,\tau+\th\varphi)=\wp(z,\tau)+\th\varphi\cdot \dot{\wp}(z,\tau),$$
where $\dot{\wp}=\pa_{\tau}\wp$ (we will always use the dot to denote $\pa_{\tau}$).
Then all functions $D^nR$, for $n\ge 0$, are holomorphic on $X- p$.
Freund and Rabin  prove in \cite{FR} that for fixed $\tau$, all meromorphic functions on $X_\tau$ are rational expressions of $R$, $DR$ and $D^2R$.
Below we will describe all holomorphic functions on $X_{\tau}- p$.

For this let us introduce two more functions:
%Our additional functions are
$$\Psi_1:=\th-\varphi\cdot\zeta_1(z,\tau),$$
$$\Psi_2:=\varphi\cdot \dot{\zeta}_1(z,\tau)+\th\cdot \zeta'_1(z,\tau),$$
where $\dot{\zeta}_1=\pa_{\tau}\zeta_1$, $\zeta'_1=\pa_z\zeta_1$ (the function $\Psi_1$ appears in \cite[Sec.\ 2, Eq.\ (10)]{Rabin}).

\begin{prop}\label{fun-prop}
The functions $\Psi_1$ and $\Psi_2$ are well defined holomorphic functions on $X-p$.
For each $\tau$, the functions
\begin{equation}\label{basis1-eq}
1,\Psi_1,\Psi_2,(D^nR)_{n\ge 0}
\end{equation}
form a $\C[\varphi]$-basis of $H^0(X_\tau - p,\OO)$.
One has 
\begin{equation}\label{D-Psi-eq}
D(\Psi_1)=1+\varphi\cdot\Psi_2, \ \ D(\Psi_2)=(2\pi i)^{-1}(R+\eta_1(\tau)).
\end{equation}
\end{prop}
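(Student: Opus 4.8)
The plan is to prove the three assertions in turn, the basis statement being the main point. For well-definedness it is enough to verify that $\Psi_1,\Psi_2$ are invariant under the $\Z^2$-action \eqref{main-transformation}, since holomorphy on $X-p$ is then automatic, $\zeta_1,\zeta_1',\dot\zeta_1$ being holomorphic on $\C$ away from $\La_\tau$. Invariance under $(z,\th)\mapsto(z+1,\th)$ is immediate from \eqref{zeta1-eq} and its $\pa_z$- and $\pa_\tau$-derivatives. For the generator $(z,\th)\mapsto(z+\tau+\th\varphi,\th+\varphi)$ I would expand each Weierstrass function to first order in the even nilpotent $\th\varphi$ (higher terms dying as $(\th\varphi)^2=0$) and use $\zeta_1(z+\tau,\tau)=\zeta_1(z,\tau)+1$ together with its $\pa_z$-derivative and its total $\pa_\tau$-derivative; the latter gives the identity $\dot\zeta_1(z+\tau,\tau)=\dot\zeta_1(z,\tau)-\zeta_1'(z,\tau)$ which makes the $\th\varphi$-terms cancel, all $\th^2,\varphi^2$ being zero. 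The formulas \eqref{D-Psi-eq} are then a direct computation with $D=\pa_\th+\th\pa_z$, an odd derivation with $D\varphi=0$ (so $D(\varphi g)=-\varphi\,Dg$): one finds $D(\Psi_1)=1-\th\varphi\,\zeta_1'(z,\tau)$, which is $1+\varphi\Psi_2$ by the definition of $\Psi_2$, and $D(\Psi_2)=\zeta_1'(z,\tau)+\th\varphi\,\dot\zeta_1'(z,\tau)$; substituting $\zeta_1'(z,\tau)=(2\pi i)^{-1}(\wp(z,\tau)+\eta_1(\tau))$ and its $\pa_\tau$-derivative, and comparing with $R=\wp(z,\tau)+\th\varphi\,\dot\wp(z,\tau)$, identifies this with $(2\pi i)^{-1}(R+\eta_1(\tau))$.

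For the basis statement I would view $M:=H^0(X_\tau-p,\OO)$ as a $\C[\varphi]$-module and exploit the two-step filtration $0\subset\varphi M\subset M$. Since $X_\tau-p$ is affine — its reduction is the affine curve $E_\tau-p_0$ where $E_\tau=\C/\La_\tau$ and $p_0$ is the origin, the rest of the structure sheaf being nilpotent — coherent cohomology on it vanishes in positive degree. Hence $M/\varphi M\cong H^0(X_{\tau,0}-p_0,\OO)$, where $X_{\tau,0}$ is the fiber over $\varphi=0$, and $\varphi\OO_{X_\tau}\cong\OO_{X_{\tau,0}}$ (up to parity shift), so that $\varphi M\cong M/\varphi M$ as $\C$-vector spaces via $\bar m\mapsto\varphi m$ (bijectivity using $\ker(\varphi\colon M\to M)=\varphi M$, again from affineness). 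Now $X_{\tau,0}$ is the split supercurve over $E_\tau$ with the trivial (odd) spin bundle, so $H^0(X_{\tau,0}-p_0,\OO)=\OO(E_\tau-p_0)\oplus\OO(E_\tau-p_0)\cdot\th$, and classically $\OO(E_\tau-p_0)$ has $\C$-basis $\{1\}\cup\{\pa_z^k\wp(z,\tau)\}_{k\ge0}$, one function for each pole order $0,2,3,4,\dots$ at $p_0$.

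I would then reduce the functions \eqref{basis1-eq} modulo $\varphi$: one gets $1\mapsto1$, $\Psi_1\mapsto\th$, $\Psi_2\mapsto(2\pi i)^{-1}\th\,(\wp(z,\tau)+\eta_1(\tau))$, and, using $D^2=\pa_z$, $D^{2k}R\mapsto\pa_z^k\wp(z,\tau)$ and $D^{2k+1}R\mapsto\th\,\pa_z^{k+1}\wp(z,\tau)$. A pole-order count then shows that the even reductions $\{1,\wp,\wp',\dots\}$ are a $\C$-basis of $\OO(E_\tau-p_0)$ and the odd reductions $\th\cdot\{1,(2\pi i)^{-1}(\wp+\eta_1),\wp',\wp'',\dots\}$ a $\C$-basis of $\OO(E_\tau-p_0)\cdot\th$, so all the reductions together form a $\C$-basis of $M/\varphi M$. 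Via the isomorphism $\varphi M\cong M/\varphi M$, the products of $\varphi$ with the functions \eqref{basis1-eq} form a $\C$-basis of $\varphi M$, and the standard filtered-basis argument for $0\subset\varphi M\subset M$ yields that \eqref{basis1-eq} is a $\C[\varphi]$-basis of $M$, and hence \eqref{D-Psi-eq} determines $D$ on all of $M$.

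The genuinely substantive step is this last one: correctly identifying the central fiber $X_{\tau,0}$ as the split odd-spin supercurve, describing $H^0$ of its affine part, and carrying out the pole-order bookkeeping that makes the mod-$\varphi$ reductions a basis. Once that classical elliptic-function input and the affineness (hence vanishing of higher cohomology) are in place, the lift from $M/\varphi M$ to $M$ is purely formal. The supercommutative/nilpotent bookkeeping with $\th\varphi$ in the first two parts is fiddly but routine.
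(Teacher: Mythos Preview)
Your argument is correct. For the invariance check and for \eqref{D-Psi-eq} you proceed essentially as the paper does (the paper verifies \eqref{D-Psi-eq} by matching Laurent expansions at $z=0$ rather than by direct differentiation, but that is a cosmetic difference). For the basis statement, however, you take a genuinely different route. The paper argues directly on $X_\tau$: it writes a general invariant function as $A_0(z)+\varphi A_1(z)+\th B_0(z)+\th\varphi B_1(z)$, reads off from \eqref{main-transformation} the functional equations $\De_\tau A_0=\De_\tau B_0=0$, $\De_\tau A_1=-B_0$, $\De_\tau B_1=-A_0'$, solves the pole-order $\le 1$ case by hand, and then records the explicit Laurent expansions of the functions \eqref{basis1-eq} at $z=0$ to both reduce an arbitrary function to that case and to prove $\C[\varphi]$-linear independence. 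You instead pass to the associated graded for the $\varphi$-filtration: affineness of $X_\tau-p$ gives $M/\varphi M\cong H^0(X_{\tau,0}-p_0,\OO)$ and $\ker(\varphi\,\cdot)=\varphi M$, the split fibre $X_{\tau,0}\cong E_\tau\times\A^{0|1}$ reduces everything to the classical basis of $\OO(E_\tau-p_0)$, and a formal Nakayama-type lift finishes. Your approach is more structural and avoids any expansions on the supercurve; the paper's is more hands-on but has the practical payoff that the Laurent expansions it writes down are reused verbatim in the proof of Theorem~\ref{algebra-thm}.
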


\begin{proof}
The invariance of $\Psi_1$ and $\Psi_2$ under the transformations \eqref{main-transformation} follow from the relations \eqref{zeta1-eq},
and from the following relation obtained from \eqref{zeta1-eq} by differentiating with respect to $\tau$:
$$\dot{\zeta}_1(z+\tau,\tau)-\dot{\zeta}_1(z,\tau)=-\zeta'_1(z,\tau).
$$

Let $f$ be a meromorphic function on $X_\tau$ invariant under \eqref{main-transformation}. We can write $f=A+\th B$, where $A=A_0(z)+\varphi A_1(z)$, $B=B_0(z)+\varphi B_1(z)$.
Then setting $\De_\tau g(z):=g(z+\tau)-g(z)$, we get the following equations on $A_i,B_i$:
%\begin{equation}
%\begin{array}{l}
\begin{align*}
&A_i(z+1)=A_i(z), \ \ B_i(z+1)=B_i(z), \ i=1,2, \\
&\De_\tau A_0=0, \ \ \De_{\tau} B_0=0, \\
&\De_{\tau} A_1(z)=-B_0(z), \ \ \De_\tau B_1(z)=-A'_0(z).
\end{align*}
%\end{array}
%\end{equation}

Assume first that $f$ has pole of order at most $1$ at $z=0$. Then these equations imply that $A_0$ and $B_0$ are constants, hence, $B_1$ is also constant.
We also get that $A_1$ is of the form $-B_0\zeta_1+c$, where $c\in\C$. Thus, we get
$$f=A_0+\varphi(-B_0\zeta_1+c)+\th(B_0+\varphi B_1)=(A_0+\varphi c)-(B_0+\varphi B_1)\Psi_1,
$$
i.e., $f$ is a $\C[\varphi]$-linear combination of $1$ and $\Psi_1$.

Next, we observe that the functions \eqref{basis1-eq} have expansions near $z=0$,
\begin{align*}
&\Psi_1=\th+(2\pi i)^{-1}\cdot\frac{\varphi}{z}+O(z), \ \ \Psi_2=(2\pi i)\cdot \frac{\th}{z^2}+O(1), \\
&D^{2n}R=(-1)^n\frac{(n+1)!}{z^{n+2}}+O(1), \ \ D^{2n+1}R=(-1)^{n+1} \frac{(n+2)!\th}{z^{n+3}}+O(1), \ n\ge 0.
\end{align*}
These expansions immediately imply that \eqref{basis1-eq} are linearly independent over $\C[\varphi]$.
Also, we see that for any holomorphic function $f$ on $X-p$ we can find a $\C[\varphi]$-linear combination $g$ of $(\Psi_2,D^nR)$ such that
$f-g$ has a pole of order $\le 1$ at $z=0$. Applying the particular case considered above, we deduce that $f$ is a $\C[\varphi]$-linear combination of
the elements \eqref{basis1-eq}.

The equations \eqref{D-Psi-eq} are obtained by calculating the expansions of both sides at $z=0$.
\end{proof}

Note that global holomorphic functions on $X_\tau$ are $\C$-linear combinations of $1$, $\varphi$ and $\varphi\Psi_1=\varphi\th$,
so $H^0(X_\tau,\OO)$ is not free as a $\C[\varphi]$-module.

\subsection{An algebraic model}

%The results of this subsection are not used elsewhere. 
Here we explain a simple way to construct the family $X/S$ algebraically
(other constructions can be found in \cite{FR} and \cite{Levin}).

It is convenient to replace $D$ by another global odd derivation, generating the same distribution:
%rescaling by powers of a fixed square root $(2\pi i)^{1/2}$:
$$\wt{D}:=(2\pi i)^{-1/2}[1-(2\pi i)\cdot\frac{E_2}{12}\th\varphi]D$$
(we choose once and for all a square root $(2\pi i)^{1/2}$).
Let us set
$$x:=(2\pi i)^{-2}[1-(2\pi i)\frac{E_2}{6}\th\varphi]\cdot R, \ \ y:=(2\pi i)^{-3}[1-(2\pi i)\frac{E_2}{4}\th\varphi]\cdot D^2R,$$
%=\wt{D}^2x,$$
$$\psi:=(2\pi i)^{1/2}\Psi_1, \ \ \wt{\varphi}:=(2\pi i)^{1/2}\varphi, \ \ \wt{\Psi}_2:=(2\pi i)^{-1/2}\Psi_2.$$

We will also use the following relations:
$$g_2(\tau)=(2\pi i)^4\cdot\frac{E_4(q)}{12}, \ \ g_3(\tau)=-(2\pi i)^6\cdot \frac{E_6(q)}{216}, \ \ \eta_1(\tau)=-(2\pi i)^2\cdot \frac{E_2(q)}{12},$$
$$\dot{g}_2(\tau)=(2\pi i)^5\cdot\frac{\partial E_4(q)}{12}, \ \ \dot{g}_3(\tau)=-(2\pi i)^7\cdot \frac{\partial E_6(q)}{216}, \ \ \dot{\eta}_1(\tau)=-(2\pi i)^3\cdot \frac{\partial E_2(q)}{12}$$
where $q=\exp(2\pi i\tau)$, 
$\partial=q\partial_q$, $\partial E_2=(E_2^2-E_4)/12$, $\partial E_4=(E_2E_4-E_6)/3$, and $\partial E_6=(E_2E_6-E_4^2)/2$.

\begin{lemma}
One has the following relations
\begin{equation}\label{Psi2-eq}
\wt{\Psi}_2=x\psi+\frac{1}{2}\wt{\varphi}y-\frac{E_2}{12}\psi,
\end{equation}
\begin{equation}\label{DR-eq}
\wt{D}(x)=y\psi+\wt{\varphi}\cdot[2x^2-\frac{E_4}{36}],
%\wt{D}x=y\psi+\wt{\eta}\cdot[2x^2+\frac{E_2}{6}x-\frac{E_4}{36}],
\end{equation}
\begin{equation}\label{D3R-eq}
\wt{D}(y)=(6x^2-\frac{E_4}{24})\psi+3\wt{\varphi}xy,
%\wt{D}y=\wt{\Psi}_2\cdot (6x+\frac{E_2}{2})+\frac{\pa E_2}{2}\psi,
\end{equation}
\begin{equation}\label{Dpsi-eq}
\wt{D}(\psi)=1+\wt{\varphi}\psi x,
\end{equation}
\begin{equation}\label{main-eq}
y^2-4x^3+\frac{1}{12}(E_4-\frac{E_6}{3} \psi\wt{\varphi})x-\frac{1}{216}(E_6-\frac{E_4^2}{2} \psi\wt{\varphi})=0.
%y^2-4x^3+\frac{1}{12}(E_4+\pa E_4\cdot \psi\wt{\eta})x-\frac{1}{216}(E_6+\pa E_6\cdot \psi\wt{\eta})=0.
\end{equation}
\end{lemma}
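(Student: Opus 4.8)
The plan is to verify the five identities \eqref{Psi2-eq}--\eqref{main-eq} one at a time, treating each as an identity of holomorphic functions on $X_\tau-p$ (for the version over $S$, of functions over $\OO(S)$). Since $\th^2=\varphi^2=0$, every such function has four components -- its $1$-, $\th$-, $\varphi$-, and $\th\varphi$-parts, all functions of $(z,\tau)$ -- and I would check each relation componentwise. After substituting $g_2=(2\pi i)^4E_4/12$, $g_3=-(2\pi i)^6E_6/216$ and $\eta_1=-(2\pi i)^2E_2/12$, the $1$- and $\th$-components reduce to classical Weierstrass identities: \eqref{Psi2-eq} and \eqref{Dpsi-eq} to $\zeta_1'=(2\pi i)^{-1}(\wp+\eta_1)$ and $D\th=1$, \eqref{D3R-eq} to $\wp''=6\wp^2-\tfrac12 g_2$, \eqref{main-eq} to the Weierstrass cubic $(\wp')^2=4\wp^3-g_2\wp-g_3$, and \eqref{DR-eq} to a triviality. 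So the only real content is in the $\varphi$- and $\th\varphi$-components.

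For those, I would use three inputs: the relations \eqref{D-Psi-eq} of Proposition \ref{fun-prop} (which encode $D\Psi_1$ and $D\Psi_2$, hence the $\tau$-derivatives of $\zeta_1$); the classical ``heat-type'' formula $2\pi i\,\pa_\tau\wp=2\wp^2-2\eta_1\wp-(2\pi i)\wp'\zeta_1-\tfrac13 g_2$ together with its $z$-derivative, which serve to eliminate the $\dot\wp,\dot\wp'$ entering through $DR=\varphi\dot\wp+\th\wp'$ and $D^2R=\wp'+\th\varphi\dot\wp'$; and the Ramanujan identities $\pa E_2=(E_2^2-E_4)/12$, $\pa E_4=(E_2E_4-E_6)/3$, $\pa E_6=(E_2E_6-E_4^2)/2$, which express $\dot\eta_1,\dot g_2,\dot g_3$ in closed form. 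With these, \eqref{Dpsi-eq} is immediate (its $\th\varphi$-component is just $\varphi\Psi_2=(2\pi i)\th\varphi\,x-(2\pi i)\tfrac{E_2}{12}\th\varphi$, which cancels the $\th\varphi$-term in the factor defining $\wt D$, using $\psi\wt\varphi=(2\pi i)\th\varphi$ and $\th\varphi\,\wp=(2\pi i)^2\th\varphi\,x$); \eqref{Psi2-eq} comes from \eqref{D-Psi-eq}, the term $-\tfrac{E_2}{12}\psi$ being exactly what absorbs the $\eta_1z$-shift built into $\zeta_1$; and \eqref{DR-eq}, \eqref{D3R-eq} follow by applying $\wt D$ to the definitions of $x$ and $y$ (using $D(\th\varphi)=\varphi$, $D(E_j)=0$) and substituting the heat-type formula, the $E_2$-terms produced by the auxiliary factors $1-(2\pi i)\tfrac{E_2}{k}\th\varphi$ cancelling against those coming from $\eta_1$.

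The main obstacle is \eqref{main-eq}, which ties everything together and is the one place where the cancellation of $E_2$ is nontrivial. The efficient route: since $R=\wp(z,\tau+\th\varphi)$ and $D^2R=\pa_zR$, the Weierstrass cubic for the nilpotently deformed modulus holds identically, $(D^2R)^2=4R^3-g_2(\tau+\th\varphi)\,R-g_3(\tau+\th\varphi)$; and, thanks to the particular choice of the coefficients $\tfrac16$ and $\tfrac14$ in the definitions of $x$ and $y$, the prefactors in $y^2$ and in $4x^3$ coincide -- both equal $1-(2\pi i)\tfrac{E_2}{2}\th\varphi$ -- so that $y^2-4x^3=(2\pi i)^{-6}\bigl(1-(2\pi i)\tfrac{E_2}{2}\th\varphi\bigr)\bigl((D^2R)^2-4R^3\bigr)$. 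Expanding $g_2(\tau+\th\varphi)=g_2+\th\varphi\,\dot g_2$ and $g_3(\tau+\th\varphi)=g_3+\th\varphi\,\dot g_3$, rewriting $R$ through $x$ and every $\th\varphi$ through $(2\pi i)^{-1}\psi\wt\varphi$, and inserting the Eisenstein-series formulas for $g_2,g_3,\dot g_2,\dot g_3$, one collects the left-hand side of \eqref{main-eq} and checks that it vanishes; the delicate point is that the $E_2E_4$ inside $\dot g_2$, the $E_2$ inside $\dot g_3$, and the $E_2$'s in the common prefactor and in the rescaling $R\mapsto x$ must cancel pairwise, so that only $E_4$ and $E_6$ survive. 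As a uniform cross-check, and a possible alternative to all of the above, each of \eqref{Psi2-eq}--\eqref{main-eq} can instead be verified by comparing Laurent expansions at $p$: every side is holomorphic on $X_\tau-p$ with poles of bounded order, so only finitely many coefficients need be compared, and a holomorphic function on $X_\tau-p$ with vanishing principal part lies in $\C\oplus\C\varphi\oplus\C\varphi\th$; the needed expansions are those recorded in the proof of Proposition \ref{fun-prop}.
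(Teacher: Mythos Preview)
Your proposal is correct. For \eqref{main-eq} you and the paper do the same thing: substitute $\tau\mapsto\tau+\th\varphi$ into the classical Weierstrass cubic and then track how the rescalings in $x,y$ make the two leading prefactors coincide. For the remaining four relations, however, your primary route is different from the paper's. The paper simply invokes Proposition~\ref{fun-prop}: both sides of each identity lie in $\OO(X_\tau-p)$, and by the expansions recorded there a function in that space is pinned down by finitely many Laurent coefficients at $p$; so one just compares principal parts and is done. You instead carry out a direct componentwise verification, feeding in the heat-type identity for $\dot\wp$ and the Ramanujan relations for $\partial E_2,\partial E_4,\partial E_6$, and only mention the expansion argument as a cross-check at the end.

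Both approaches are valid. The paper's is shorter and more uniform --- it avoids ever writing down the heat equation or the Ramanujan identities, at the price of being less self-contained (one has to trust that the finitely many coefficient checks go through). Your approach is more laborious but makes the mechanism transparent: it exhibits exactly which classical identities are responsible for each relation, and in particular explains \emph{why} the particular exponents $\tfrac16,\tfrac14$ in the definitions of $x,y$ are the right ones to kill all $E_2$-dependence in \eqref{main-eq}. Note that the heat-type relations you need are in fact recorded later in the paper (Step~6 of the proof of Theorem~\ref{stable-thm}), so your argument is not importing anything foreign.
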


\begin{proof}
These relations are checked by considering expansions of both sides in $(z,\th)$ near $z=0$, and using Proposition \ref{fun-prop}.
The last equation can also be checked by substituting $\tau\mapsto \tau+\th\varphi$ in the standard Weierstrass cubic relation for $\wp'$ and $\wp$.
\end{proof}

Let $C_{\tau,\varphi}$ be the cubic in the relative superprojective space $\P^{2|1}_S$ with homogeneous coordinates $(x_0:x_1:x_2:\phi)$
given by the homogenization of the equation \eqref{main-eq}:
$$x_0x_2^2-4x_1^3+\frac{1}{12}(E_4x_0^2-\frac{E_6}{3} x_0\phi\wt{\varphi})x_1-\frac{1}{216}(E_6x_0^3-\frac{E_4^2}{2}\cdot x_0^2\phi\wt{\varphi})=0.$$
%Note that in fact $C_{\tau,\eta}$ is the pull-back of ???

\begin{theorem}\label{algebra-thm}
(i) The algebra $\OO(X_\tau-p)$ of holomorphic functions on $X_\tau-p$ (resp., the algebra $\OO(X-p)$) is generated over $\C[\varphi]$ (resp., $\OO(S)$) by
$\psi$ (an odd generator), $x$ and $y$ (even generators), with the defining relation \eqref{main-eq}. We have
the following $\C[\varphi]$-basis of $\OO(X_\tau-p)$:
\begin{equation}\label{basis2-eq}
(x^n, yx^n, x^n\psi, yx^n\psi)_{n\ge 0}.
\end{equation}
The odd derivation $\wt{D}$ acts on $\OO(X_\tau-p)$ by the formulas \eqref{DR-eq}, \eqref{D3R-eq} and \eqref{Dpsi-eq}.
%$$\wt{D}x=y\psi+\wt{\eta}\cdot[2x^2+\frac{E_2}{6}x-\frac{E_4}{36}],$$
%$$\wt{D}y=(x\psi+\frac{1}{2}\wt{\eta}y-\frac{E_2}{12}\psi)\cdot (6x+\frac{E_2}{2})+\frac{\pa E_2}{2}\psi,$$
%$$\wt{D}\psi=1+\wt{\eta}\cdot(x\psi-\frac{E_2}{12}\psi).$$

\noindent
(ii) The map $(z,\th)\mapsto (1:x(z,\th):y(z,\th):\psi(z,\th))$ extends to a regular morphism $f:X_\tau\to C_{\tau,\varphi}$, which identifies $X_{\tau}$ with
the blow-up $\wt{C}_{\tau,\varphi}$ of $C_{\tau,\varphi}$ at the ideal $(\frac{\phi}{x_2}+2\wt{\varphi}\frac{x_1^2}{x_2^2}, \frac{x_0}{x_2})$ supported
at the point $x_0=x_1=0$.
\end{theorem}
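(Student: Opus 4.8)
\emph{Part (i).} The plan is to bootstrap from Proposition \ref{fun-prop}. That proposition already gives the $\C[\varphi]$-basis $1,\Psi_1,\Psi_2,(D^nR)_{n\ge 0}$ of $H^0(X_\tau-p,\OO)$, so I only need to rewrite this basis in terms of $x,y,\psi$. First I would observe that $x,y,\psi$ are obtained from $R,D^2R,\Psi_1$ by multiplying by units in $\OO(S)$ (of the form $1+(\text{const})\th\varphi$), hence $x,y,\psi$ together with $1$ and $\wt\Psi_2$ span the same $\C[\varphi]$-module as the old basis in low degree; combined with relation \eqref{Psi2-eq}, which expresses $\wt\Psi_2$ as a $\C[\varphi]$-linear combination of $x\psi,\wt\varphi y,\psi$, this shows $\wt\Psi_2$ is redundant. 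Next, using \eqref{DR-eq} and \eqref{D3R-eq} I would show by induction on $n$ that $D^{2n}R$ and $D^{2n+1}R$ lie in the $\C[\varphi]$-span of $(x^n,yx^n,x^n\psi,yx^n\psi)_{n\ge 0}$: applying $\wt D$ to $x^n$ produces $x^{n-1}$ times the right-hand side of \eqref{DR-eq}, and every monomial $y^2x^k$ that appears is reduced via \eqref{main-eq} to $x$-monomials and $\psi$-monomials. This simultaneously proves that the list \eqref{basis2-eq} spans $\OO(X_\tau-p)$ and that it is linearly independent (matching cardinalities of graded pieces against the Proposition \ref{fun-prop} basis, or reading off leading terms in the $z$-expansion). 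Finally, the ring-presentation statement: the relations \eqref{DR-eq}, \eqref{D3R-eq}, \eqref{Dpsi-eq} show the $\C[\varphi]$-subalgebra generated by $x,y,\psi$ is closed under $\wt D$ hence equals all of $\OO(X_\tau-p)$, and \eqref{main-eq} gives a surjection from $\C[\varphi][x,y,\psi]/(\eqref{main-eq})$ whose source already has the $\C[\varphi]$-basis \eqref{basis2-eq} (because $y^2$ can be eliminated and $\psi^2=0$), so it is an isomorphism. The relative statement over $\OO(S)$ follows by the same argument with $\C[\varphi]$ replaced by $\OO(S)$, using that everything is flat over the base.

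\emph{Part (ii).} Here the plan is geometric. The rational map $(z,\th)\mapsto (1:x:y:\psi)$ is a priori only defined on $X_\tau-p$; I would first check it extends across $p$ by examining the leading $z$-expansions (from Proposition \ref{fun-prop}) of $x,y,\psi$ near $z=0$: since $y$ has the highest-order pole, rescaling by $x_2$ one gets $(x_0:x_1:x_2:\phi)=(\text{small}:\text{small}:1:\text{small})$ as $z\to 0$, so the map lands at the point $x_0=x_1=0$ of $C_{\tau,\varphi}$, is regular there, and sends the reduced point $p$ to that point. This produces a regular $f:X_\tau\to C_{\tau,\varphi}$. Next I would identify the target of the blow-up: on $C_{\tau,\varphi}$ the functions $\phi/x_2$ and $x_0/x_2$ both vanish at $x_0=x_1=0$, and from the equation of $C_{\tau,\varphi}$ one sees $x_0x_2^2$ is divisible by $x_1^3$ near that point, so the local ring of $C_{\tau,\varphi}$ there is not regular (it is the ordinary Weierstrass-type singularity one gets at infinity), and the ideal $(\frac{\phi}{x_2}+2\wt\varphi\frac{x_1^2}{x_2^2},\frac{x_0}{x_2})$ is exactly the one whose blow-up resolves it; the odd correction term $2\wt\varphi\frac{x_1^2}{x_2^2}$ is dictated by \eqref{DR-eq}, which controls how $\wt D$ of the even coordinate picks up a $\wt\varphi$-term. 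Finally I would verify $f$ factors through an isomorphism $X_\tau\xrightarrow{\sim}\wt C_{\tau,\varphi}$: $f$ is an isomorphism away from $p$ by Part (i) (the two affine rings agree), both $X_\tau$ and $\wt C_{\tau,\varphi}$ are smooth proper supercurves of the right dimension, and near $p$ one checks directly that $x_0/x_2$ and the corrected $\phi/x_2$ generate the maximal ideal of the section — equivalently that they pull back to a system of superconformal-type coordinates centered at $p$ — so $f$ is étale along $p$ and hence an isomorphism.

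\emph{Main obstacle.} The routine-but-delicate heart of Part (i) is the reduction algorithm: showing that repeatedly applying $\wt D$ to $x^n$ and reducing $y^2$ via \eqref{main-eq} never escapes the span \eqref{basis2-eq} and produces exactly the right leading terms to match the Proposition \ref{fun-prop} basis. For Part (ii) the real subtlety is pinning down the precise blow-up ideal — in particular the odd summand $2\wt\varphi\, x_1^2/x_2^2$ — and proving $f$ is an isomorphism (not merely a birational morphism) near the puncture; this requires a careful local computation at $x_0=x_1=0$ showing the blow-up coordinates match the $(z,\th)$-coordinates up to a unit, which is where the superconformal normalization $\wt D$ rather than $D$ is essential.
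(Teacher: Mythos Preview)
Your Part (i) is correct and close to the paper's argument, just organized differently: the paper bypasses the $\wt D$-stability induction and instead writes down the leading $z$-expansions of $x^n,\,yx^n,\,x^n\psi,\,yx^n\psi$ directly, then repeats the pole-reduction argument from Proposition \ref{fun-prop} to get both spanning and linear independence at once. Your route via $\wt D$-closedness of the span works, but the ``matching cardinalities'' option for linear independence is vague---you should just commit to the $z$-expansion check, which is exactly what the paper does.

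Part (ii) has a real gap. Your claim that ``the local ring of $C_{\tau,\varphi}$ there is not regular (it is the ordinary Weierstrass-type singularity one gets at infinity)'' is wrong: the point $x_0=x_1=0$ on a smooth Weierstrass cubic in $\P^2$ is a smooth point (it is the origin of the elliptic curve), and the same holds for $C_{\tau,\varphi}\subset\P^{2|1}$---in the affine chart $x_2\neq 0$ one solves the cubic for $x_0/x_2$ in terms of $(x_1/x_2,\phi/x_2)$, so these are honest local coordinates on $C_{\tau,\varphi}$. Consequently your verification plan (``$x_0/x_2$ and the corrected $\phi/x_2$ \ldots pull back to a system of coordinates centered at $p$'') would fail if you actually carried it out: from the expansions one has $f^*(x_0/x_2)=1/y\sim z^3$ and $f^*(\phi')\sim \th z^3$, neither of which is a local parameter at $p$. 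The map $f:X_\tau\to C_{\tau,\varphi}$ is \emph{not} \'etale at $p$; that is precisely why the blow-up is needed and is a genuine modification even though $C_{\tau,\varphi}$ is smooth (a phenomenon with no even analogue on smooth curves, since there every ideal at a point is principal).

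The paper's fix is concrete: compute the expansions $f^*(x_1/x_2)\sim z$ and $f^*\bigl(\phi'\cdot(x_2/x_0)\bigr)=(2\pi i)^{1/2}\th+O(z)$, where $\phi'=\phi/x_2+2\wt\varphi(x_1/x_2)^2$. The second of these is the extra function adjoined by the blow-up, and the pair $(x_1/x_2,\ \phi'\cdot x_2/x_0)$---not the pair you named---pulls back to local coordinates on $X_\tau$, establishing the isomorphism $X_\tau\simeq\wt C_{\tau,\varphi}$ near $p$.
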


\begin{proof}
(i) We can compute the expansions of the elements \eqref{basis2-eq} at $z=0$:
\begin{align*}
& x^n=\frac{a_n}{z^{2n}}+\ldots, & yx^n=\frac{b_n}{z^{2n+3}}+\ldots, \\ 
& x^n\psi=\frac{c_n\th}{z^{2n}}+\ldots (\mod \varphi), & yx^n\psi=\frac{d_n\th}{z^{2n+3}}+\ldots (\mod\varphi),
\end{align*}
where $a_n,b_n,c_n,d_n$ are nonzero complex constants.
It follows that for any element $f\in \OO(X_\tau-p)$, there exists a $\C[\varphi]$-linear combination $g$ of these elements
such that $f-g$ has pole of order at most $1$ at $z=0$. As we have seen in the proof of Proposition \ref{fun-prop}, this
implies that $f-g$ is a linear combination of $1$ and $\psi$. Thus, the elements \eqref{basis2-eq} span $\OO(X_\tau-p)$.
The above expansions also show linear independence of \eqref{basis2-eq} over $\C[\varphi]$.

Let $F(x,y,\psi)$ denote the left-hand side of \eqref{main-eq}. Then we have a surjection
\begin{equation}\label{quotient-presentation}
\C[\varphi][x,y,\psi]/(F)\to \OO(X_\tau-p)
\end{equation}
It is easy to see that the images of \eqref{basis2-eq} span $\C[\varphi][x,y,\psi]/(F)$ over $\C[\varphi]$. Since their images in
$\OO(X_\tau-p)$ are linearly independent, we deduce that \eqref{quotient-presentation} is an isomorphism.

%The formulas for the action of $\wt{D}$ follow from \eqref{D-Psi-eq}, \eqref{Psi2-eq}, \eqref{DR-eq} and \eqref{D3R-eq}.

\noindent
(ii) It is clear that the map $f$ is regular away from $z=0$. Furthermore, part (i) shows that the restriction $f$ to $X_\tau-p$ induces an isomorphism
of $X_\tau-p$ with the affine part $(x_0\neq 0)$ in $C_{\tau,\varphi}$.

On the other hand, near $z=0$, the expansions
$$f^*\frac{x_0}{x_2}=\frac{1}{y}=-\frac{(2\pi i)^3}{2}z^3(1+O(z^4))(1-(2\pi i)\frac{E_4}{4}\th\varphi), \ \ 
f^*\frac{x_1}{x_2}=\frac{x}{y}=-\frac{2\pi i}{2}z(1+O(z^4))(1-(2\pi i)\frac{E_2}{12}\th\varphi),$$
$$f^*\frac{\phi}{x_2}=\frac{\psi}{y}=-\frac{(2\pi i)^2}{2}\wt{\varphi}z^2-\frac{(2\pi i)^{7/2}}{2}\th z^3+O(z^4).
$$
show that $f$ is regular near $z=0$. 

The blow up $\wt{C}_{\tau,\varphi}\to C_{\tau,\varphi}$ at the ideal $(\phi':=\frac{\phi}{x_2}+2\wt{\varphi}\frac{x_1^2}{x_2^2},\frac{x_0}{x_2})$, we add
to the local ring of $C_{\tau,\varphi}$ the function $\phi'\cdot (x_0/x_2)^{-1}$.
We have
% that the ideal $(f^*(\frac{\phi}{x_2}-2\wt{\varphi}\frac{x_1^2}{x_2^2}),f^*\frac{x_0}{x_2})$ in the local
%ring of $z=0$ coincides with $(\th z^3,z^3)$. Indeed, we have
$$f^*(\phi')=-\frac{(2\pi i)^{7/2}}{2}\th z^3+O(z^4).$$
$$f^*(\phi'\cdot \frac{x_2}{x_0})=(2\pi i)^{1/2}\th+O(z).$$
Since $f^*(x_1/x_2)=-2\pi i z(1+O(z^2))(1-(2\pi i)\frac{E_2}{12}\th\varphi)$,
this implies the identification of $X_\tau$ with the blow up $\wt{C}_{\tau,\varphi}$.
\end{proof}

Let $Q\sub \OO(S)$ denote the (quasimodular) $\C$-subalgebra generated by $E_2$, $E_4$, $E_6$ and $\varphi$. Let
$A\sub \OO(X-p)$ denote the $Q$-subalgebra generated by $x$, $y$ and $\psi$.

\begin{cor}\label{quasimod-cor}
The pair $(A,\wt{D})$ is generated as an algebra with a derivation over $Q$ by $\psi$ and $x$ (or by $\psi$ and $\wt{\Psi}_2$).
\end{cor}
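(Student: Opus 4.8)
The plan is to show that the derivation $\wt{D}$ applied repeatedly to $\psi$ and $x$ generates $y$ over $Q$, after which Theorem \ref{algebra-thm}(i) gives the rest. Concretely, I would start from the four structural formulas \eqref{DR-eq}, \eqref{D3R-eq}, \eqref{Dpsi-eq} and the quadratic relation \eqref{main-eq}. From \eqref{DR-eq} we have $\wt{D}(x) = y\psi + \wt{\varphi}(2x^2 - E_4/36)$, so $y\psi = \wt{D}(x) - \wt{\varphi}(2x^2 - E_4/36)$ already lies in the subalgebra-with-derivation generated by $\psi$ and $x$ over $Q$ (note $\wt{\varphi}\in Q$ up to the chosen square root, which I will absorb into the ground ring). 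The issue is to pass from $y\psi$ to $y$ itself, since $\psi$ is a zero-divisor (indeed $\psi^2$ is a multiple of $\wt{\varphi}$ by the relations, so one cannot simply divide).

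The key trick is to use \eqref{Dpsi-eq}: $\wt{D}(\psi) = 1 + \wt{\varphi}\psi x$. Applying the Leibniz rule (with the even derivation $\wt{D}$ and the sign conventions of the paper) to the product $y\psi$ gives $\wt{D}(y\psi) = \wt{D}(y)\cdot \psi \pm y\cdot \wt{D}(\psi)$, so $\pm y\,\wt{D}(\psi) = \wt{D}(y\psi) - \wt{D}(y)\psi$. Now $\wt{D}(\psi) = 1 + \wt{\varphi}\psi x$ has invertible leading term $1$, so $y\cdot(1 + \wt{\varphi}\psi x) = \pm(\wt{D}(y\psi) - \wt{D}(y)\psi)$ expresses $y + \wt{\varphi}xy\psi$ in terms of things built from $\psi, x$, provided $\wt{D}(y)$ and $\wt{D}(y)\psi$ are also available. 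By \eqref{D3R-eq}, $\wt{D}(y) = (6x^2 - E_4/24)\psi + 3\wt{\varphi}xy$, which again contains a stray $xy$; but $xy = x\cdot y$ and we are trying to produce $y$, so I would set this up as a small linear system. Writing $u := y$ and $v := y\psi$, the relations give: $v$ is known (from $\wt{D}(x)$); $\wt{D}(v) = [(6x^2-E_4/24)\psi + 3\wt\varphi x u]\psi \pm u(1+\wt\varphi\psi x)$. Using $\psi^2 \in \wt{\varphi}Q[x,\ldots]$ (derived by multiplying \eqref{main-eq} or by the identity for $\psi^2$ that follows from the genus-1 geometry — here I should double-check the exact form $\psi^2 = $ const$\cdot\wt{\varphi}\psi$ or similar from the expansions in the proof of Proposition \ref{fun-prop}), the term with $\psi^2$ collapses, and I am left with an equation of the form $\pm u + (\text{multiple of }\wt{\varphi}\psi x)\,u = \wt{D}(v) - (\text{known})$, from which $u = y$ is recovered by inverting the unit $1 \mp \wt{\varphi}\psi x$ (its inverse is $1 \pm \wt{\varphi}\psi x$ since $(\wt{\varphi}\psi)^2 = 0$ as $\wt{\varphi}^2=0$). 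Thus $y$ lies in the subalgebra-with-derivation over $Q$ generated by $\psi$ and $x$, and combined with the presentation in Theorem \ref{algebra-thm}(i), this gives that all of $A$ is so generated.

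For the parenthetical alternative — generation by $\psi$ and $\wt{\Psi}_2$ — I would invoke \eqref{Psi2-eq}: $\wt{\Psi}_2 = x\psi + \tfrac12\wt{\varphi}y - \tfrac{E_2}{12}\psi = (x - E_2/12)\psi + \tfrac12\wt{\varphi}y$. Here $y\psi$ should be expressible: multiplying by $\psi$ kills the $\tfrac12\wt\varphi y$ term up to $\wt\varphi\psi\cdot y$, so I get $\wt{\Psi}_2\psi = (x-E_2/12)\psi^2 + \tfrac12\wt\varphi y\psi$, and using $\psi^2 \in \wt\varphi(\ldots)$ this recovers $y\psi$ in terms of $\wt{\Psi}_2, \psi$ over $Q$. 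For $x$ itself, one uses \eqref{D-Psi-eq}/\eqref{Dpsi-eq}-type formulas, or notes $\wt{D}(\psi) = 1 + \wt\varphi\psi x$ shows $\wt\varphi\psi x = \wt{D}(\psi) - 1$, which only gives $x$ up to the $\wt\varphi\psi$-torsion; the cleaner route is $\wt{D}(\wt{\Psi}_2) = \wt{D}((x-E_2/12)\psi) + \tfrac12\wt\varphi\wt{D}(y)$, expand with Leibniz, and isolate $x$ via the invertible leading coefficient — this is the same inversion-of-a-unipotent-unit maneuver. Then having $x$ and $\psi$, the previous paragraph finishes it.

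The main obstacle, as the discussion above signals, is purely the zero-divisor phenomenon: $\psi$ and $\wt\varphi$ are nilpotent/zero-divisors, so one cannot naively cancel $\psi$ to get $y$ from $y\psi$. The resolution — differentiating $y\psi$ and exploiting that $\wt{D}(\psi)$ has a unit constant term, so that the relevant operator $1 \pm \wt\varphi\psi x$ is a unipotent unit inverted by a two-term Neumann series since $\wt\varphi^2 = 0$ — is the crux. A secondary technical point I would need to pin down carefully is the exact shape of the relation expressing $\psi^2$ (and $\psi\wt\varphi$ relations) in $A$; this follows from the expansions $\Psi_1 = \th + (2\pi i)^{-1}\varphi/z + O(z)$ and the structure of the family, but it must be stated precisely so the $\psi^2$-terms in the Leibniz expansions genuinely collapse into $Q[x,y]\psi$ rather than introducing new generators. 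Once these two points are settled, everything else is routine bookkeeping with the already-established formulas \eqref{Psi2-eq}--\eqref{main-eq} and Theorem \ref{algebra-thm}.
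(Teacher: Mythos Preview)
Your approach is correct, but you are making heavy weather of something simpler than you realize. The ``secondary technical point'' you flag --- the shape of $\psi^2$ --- is trivial: $\psi$ is odd in a supercommutative $\C$-algebra, so $\psi^2=0$ automatically. With that, your Leibniz computation collapses immediately: $\wt{D}(y\psi)=\wt{D}(y)\psi+y\wt{D}(\psi)=3\wt\varphi xy\psi + y(1+\wt\varphi\psi x)=y(1+4\wt\varphi\psi x)$ (using also $\wt\varphi^2=0$), and the unipotent inversion you describe finishes it. So there is no genuine obstacle, only a missed basic fact.

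The paper's proof takes a different and shorter route. The key observation is that $\th\varphi$ is a constant multiple of $\psi\varphi$ (since $\Psi_1\varphi=\th\varphi$), hence already lies in the subalgebra generated by $\psi$ over $Q$. But the unit factors $1-c\th\varphi$ distinguishing $\wt{D}$ from $D$, $x$ from $R$, and $y$ from $D^2R$ are therefore units \emph{in the subalgebra}. So one may replace $(\wt{D},x,y)$ by $(D,R,D^2R)$ without changing the generated subalgebra-with-derivation, and then the statement is tautological: $D^2R$ is obtained from $R$ by applying $D$ twice. Your approach computes directly with the formulas \eqref{DR-eq}--\eqref{Dpsi-eq}; the paper's approach strips off the correction factors first. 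Both work; the paper's avoids any computation.

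For the parenthetical generation by $\psi$ and $\wt\Psi_2$, your proposed manipulation of \eqref{Psi2-eq} is unnecessarily roundabout. The paper simply invokes the second identity in \eqref{D-Psi-eq}, namely $D(\Psi_2)=(2\pi i)^{-1}(R+\eta_1(\tau))$, which (after the same replacement $\wt{D}\leftrightarrow D$) expresses $R$ --- and hence $x$ --- directly as a $Q$-linear combination of $1$ and $\wt{D}(\wt\Psi_2)$. This reduces the second case to the first in one line.
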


\begin{proof} 
Since $\th\varphi=\psi\varphi$, we can replace $\wt{D}$ by $D$, $x$ by $R$ and $y$ by $D^2R$. Now the generation by $\psi$ and $x$ becomes clear, while
the generation by $\psi$ and $\wt{\Psi}_2$ follows from the second formula in \eqref{D-Psi-eq}.
\end{proof}

%\begin{remark}
%Note that the family $\wt{C}_{\tau,\varphi}\to S$ (together with the odd vector field $\wt{D}$) 
%is the pull-back of an algebraic family of supercurves over $\Z[1/6][E_2,E_4,E_6,\wt{\varphi}][(E_4^3-E_6^2)^{-1}]$.
%\end{remark}

\subsection{Kodaira-Spencer map and compactification}

Let us set $S'=S/\Z$, where we take the quotient by the $\Z$-action $\tau\mapsto \tau+1$. Then 
we have $S'=\Disk\times\C^{0|1}$, where $\Disk$ is the punctured unit disk $0<|q|<1$ with the coordinate 
$q=\exp(2\pi i\tau)$.
Clearly, our family of supercurves with an NS-puncture $(X\to S,p)$ descends to a family $(X'\to S',p')$ over $S'$.
We want to extend it to a family of stable supercurves over $\ov{S}=\ov{\Disk}\times\C^{0|1}$, where $\ov{\Disk}$ is the unit disk $|q|<1$.
For this we note that the coefficients of the cubic $C_{\tau,\varphi}$ depend on $\tau$ via $E_4$ and $E_6$, so it descends to a cubic
$C_{q,\varphi}\sub \P^{2|1}_{\ov{S}}$. 

\begin{theorem}\label{stable-thm} 
Let us define the family $\ov{X}\to \ov{S}$ with a section $\ov{p}\sub \ov{X}$ 
as the blow up $\wt{C}_{q,\varphi}$ of the cubic $C_{q,\varphi}$ at the ideal $(\frac{\phi}{x_2}+2\wt{\varphi}\frac{x_1^2}{x_2^2}, \frac{x_0}{x_2})$ supported
at the point $x_0=x_1=0$.
Then it is a family of stable supercurves of genus $1$ with an NS-puncture, with the trivial relative Berezinian $\om_{\ov{X}/\ov{S}}$,
and the corresponding morphism to the moduli space,
$$\ov{S}\to \ov{\SS}^{odd}_{1,1}$$
induces an isomorphism on tangent spaces.
\end{theorem}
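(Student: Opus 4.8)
The plan is to verify the statement in three parts: (1) that $\ov{X}\to\ov{S}$ is a family of stable supercurves of genus $1$ with one NS-puncture, (2) that the relative Berezinian $\om_{\ov{X}/\ov{S}}$ is trivial, and (3) that the Kodaira-Spencer map at the boundary point $q=0$ is an isomorphism. For (1), over the open part $q\neq 0$ this is Theorem \ref{algebra-thm}(ii), so the real content is the fiber over $q=0$. There the cubic $C_{0,\varphi}$ has equation obtained by setting $E_2=E_4=E_6=1$ (or rather the reduction at $q=0$, where $E_2(0)=E_4(0)=E_6(0)=1$), giving a nodal Weierstrass cubic with the node at $x_0=x_1=0$; I would check that the blow-up at the displayed ideal separates the two branches at the node, producing a $\P^{1|1}$ with the two preimages of the node glued to the puncture data in the stable (NS) sense, i.e.\ the reduced curve is a rational nodal curve and the relative distribution $\DD$ extends to a superconformal structure with the prescribed degeneration. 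Concretely I would work in the chart $x_2\neq 0$ with coordinates $u=x_0/x_2$, $v=x_1/x_2$, $\phi'=\phi/x_2+2\wt\varphi v^2$, use the cubic to solve $u$ in terms of $v$ near the node, and exhibit the blown-up chart explicitly; flatness over $\ov{S}$ follows since the defining equations have the expected form in a neighborhood of every point.

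For (2), triviality of $\om_{\ov{X}/\ov{S}}$: over $q\neq 0$ the Berezinian is trivialized by the form $\de(\psi)=[dz|d\th]$-type section coming from the superconformal coordinate, more precisely by $[dx_0:dx_1:dx_2|d\phi]$-data; I would write down a global nowhere-vanishing section of $\om_{\ov{X}/\ov{S}}$ in terms of the homogeneous coordinates of $\P^{2|1}$ restricted to (the blow-up of) the cubic and check it extends over $q=0$ and is nonvanishing along the fiber, including at the exceptional divisor. The Weierstrass-type form $dx/y$ in even coordinates, suitably super-completed using $\psi$, is the natural candidate; its transformation under the blow-up is controlled by the expansions $f^*(x_1/x_2)$, $f^*(x_0/x_2)$, $f^*(\phi'\cdot x_2/x_0)$ already computed in the proof of Theorem \ref{algebra-thm}.

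For (3), the Kodaira-Spencer/tangent-space statement: $\ov{S}=\ov{\Disk}\times\C^{0|1}$ has tangent space at $(q=0,\varphi=0)$ spanned by $\partial_q$ (even) and $\partial_\varphi$ (odd), while $T_{[\ov{X}_0]}\ov{\SS}^{odd}_{1,1}$ is known to be $1|1$-dimensional (the boundary stratum of the moduli of stable genus $1$ odd supercurves with one NS-puncture). So it suffices to show the KS map is injective, equivalently that neither tangent vector maps to zero. For $\partial_\varphi$ I would compute the class in $H^1$ of the relative tangent sheaf (or of $\DD^{\ot 2}$) of the first-order deformation in the $\varphi$-direction and show it is nonzero — the presence of $\wt\varphi$ in the blow-up ideal and in the cubic's coefficients shows this deformation genuinely moves the superconformal structure, not just the puncture. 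For $\partial_q$ I would similarly note that varying $q$ deforms the underlying nodal elliptic curve nontrivially (this is the classical fact that $\ov{\Disk}\to\ov{M}_{1,1}$ near $q=0$ is étale), and that the supercurve structure does not kill this. The cleanest route is to compute the Kodaira-Spencer map explicitly using the Čech description on the two-chart cover ($x_0\neq 0$ and $x_2\neq 0$) of $\ov{X}_0$, differentiating the gluing data in $q$ and $\varphi$; comparing with the explicit description of $\ov{\SS}^{odd}_{1,1}$ near this boundary point (cf.\ \cite{FR}, \cite{Levin1}) then gives the isomorphism.

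The main obstacle I expect is part (3): one must have a sufficiently precise hold on $\ov{\SS}^{odd}_{1,1}$ and its tangent space at the relevant boundary point to conclude that an injective map of $1|1$-dimensional super vector spaces is an isomorphism — in particular one needs to know the target really is $1|1$-dimensional there and that the map respects parity. Part (1) is somewhat delicate as well because one must check the stability condition and the correct degeneration of the superconformal structure at the node, i.e.\ that the NS-puncture does not collide with the node and that $\DD$ extends in the way required by the definition of a stable SUSY curve; but this is a local computation in the explicit blown-up charts and should go through using the expansions already in hand.
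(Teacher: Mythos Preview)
Your plan has the right overall shape, but part (1) contains a genuine error that would derail the argument. You claim the node of $C_{0,\varphi}$ lies at $x_0=x_1=0$ and that the blow-up ``separates the two branches at the node.'' This is wrong: for a Weierstrass cubic $x_0x_2^2=4x_1^3+\ldots$ the point $x_0=x_1=0$ is the point at infinity, which is always smooth in the underlying even curve; the node of the degenerate fiber at $q=0$ lies in the affine chart $x_0\neq 0$ (where $y=0$ and $12x^2-E_4/12=0$). The blow-up in the statement is \emph{not} resolving the node---the node must remain, since the fiber is supposed to be a stable (nodal) supercurve---it is modifying the odd structure near infinity so that the NS puncture $\ov{p}$ sits correctly. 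What actually needs to be checked in part (1), and what the paper does, is: (a) the derivation $\wt{D}$ given by \eqref{DR-eq}--\eqref{Dpsi-eq} defines a superconformal structure on the smooth locus of $C^a_{q,\varphi}$; (b) the formal superconformal coordinates $(z,\th)$ extend as relative coordinates along $\ov{p}$ over all of $\ov{S}$ (using that the change of variables has coefficients polynomial in $E_2,E_4,E_6$, regular at $q=0$); and (c) the structure derivation $\de:\OO\to\om$ is regular at the node and induces an isomorphism on odd components. None of this is about separating branches.

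Your part (2) is in the right direction but the paper's trivializing section is $\ov{s}=\de(\psi)(1-x\wt{\varphi}\psi)$; the key step is checking it generates $\om_{\ov{X}/\ov{S}}$ near infinity \emph{and} near the node over $q=0$, the latter using that $\ov{s}$ can only vanish in codimension $\ge 2$ and $\ov{X}_{\bos}$ is smooth.

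For part (3), your \v{C}ech-on-projective-charts approach is different from the paper's and underspecified in the odd direction. The paper instead constructs explicit superconformal lifts $D_\tau$, $D_\varphi$ of $\partial_\tau$, $\partial_\varphi$ to $X-p$ using $\zeta_1$ (formulas \eqref{Dtau-eq}, \eqref{Deta-eq}); the KS class is then the polar part, essentially $\zeta_1\sim(2\pi i)^{-1}z^{-1}$ in $H^1(C_s,\OO(-p))$. The extension to $q=0$ is the nontrivial step: one must show $D_\tau,D_\varphi$ preserve the quasimodular subalgebra $A$ (Corollary \ref{quasimod-cor}), which requires nontrivial identities among $\zeta_1,\wp$ and their $\tau$-derivatives. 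Your approach of directly differentiating gluing data could in principle work, but you would still need an analog of this quasimodularity to control the limit as $q\to 0$.
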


\begin{proof}
{\bf Step 1.}
First, let us consider the affine cubic $C^a_{q,\varphi}:=C_{q,\varphi}-\ov{p}\sub \A^{2|1}_{\ov{S}}$. Note that $C^a_{q,\varphi}$ is smooth over $\ov{S}$ everywhere except at one point in
the fiber over $q=0$. We claim that the odd derivation $\wt{D}$ given by \eqref{DR-eq}, \eqref{D3R-eq} and \eqref{Dpsi-eq} gives
a superconformal structure on the smooth locus of $C^a_{q,\varphi}\to \ov{S}$. Indeed, we have to check that the vector fields $\wt{D}$ and $\wt{D}^2$ are linearly
independent at all smooth points. Note that we have the following relation in $\Om^1_{C^a_{q,\varphi}/\ov{S}}$ modulo $\varphi$:
$$2ydy-12 x^2dx+\frac{E_4}{12}dx\equiv 0 \mod (\varphi).$$
First, we consider the locus $y\neq 0$. The above relation shows that $dx$ and $d\psi$ form a basis of $\Om^1_{C^a_{q,\varphi}/\ov{S}}$ on this locus. Hence, our claim on this locus follows from
the invertibility of
$$\left(\begin{matrix} \wt{D}(\psi) & \wt{D}(x) \\ \wt{D}^2(\psi) & \wt{D}^2(x)\end{matrix}\right)\equiv 
\left(\begin{matrix} 1 & y\psi \\ 0 & y\end{matrix}\right).$$
Next, we consider the locus $y=0$, $12x^2-\frac{E_4}{12}\neq 0$. Then we can take $dy$ and $d\psi$ as a basis of $\Om^1_{C^a_{q,\varphi}/\ov{S}}$, and our claim followss from the invertibility of
$$\left(\begin{matrix} \wt{D}(\psi) & \wt{D}(y) \\ \wt{D}^2(\psi) & \wt{D}^2(y)\end{matrix}\right)\equiv 
\left(\begin{matrix} 1 & \wt{D}(y) \\ 0 & 6x^2-\frac{E_4}{24}\end{matrix}\right).$$

\noindent
{\bf Step 2.}
Let $(z,\th)$ denote the formal relative superconformal coordinates on $C_{q,\varphi}$ at infinity, induced by the coordinates on $\C^{1|1}$ from the quotient construction. 
Note that the functions $(x/y,\psi+2\wt{\varphi}x^2/y)$ define local relative coordinates on $\wt{C}_{q,\varphi}$ along $p$, such that the change of variables from $(z,\th)$ to
$(x/y,\psi+2\wt{\varphi}x^2/y)$ is given by formal series of the form
$$x/y=-2\pi iz+\ldots, \ \ \psi+2\wt{\varphi}x^2/y=(2\pi i)^{1/2}\th+(2\pi i)\wt{\varphi}\frac{E_2}{12}z+\ldots$$
where the coefficients of the higher order terms are polynomials in $E_2$, $E_4$ and $E_6$. Since $E_2$, $E_4$ and $E_6$ are regular at $q=0$,
these formulas constitute an invertible change of variables everywhere along $\ov{p}$, hence $(z,\th)$ extend to formal relative coordinates at $\ov{p}\sub \wt{C}_{q,\varphi}$.
Thus, we can view $\wt{C}_{q,\varphi}$ as glued from $C_{q,\varphi}$ and a relative formal superdisk at infinity with coordinates $(z,\th)$. 
Note that the superconformal structure on the smooth part of $C_{q,\varphi}$ agrees with the superconformal structure given by $D=\partial_\th+\th\partial_z$ on the formal
superdisk, hence, they glue into a superconformal structure on the smooth part of $\wt{C}_{q,\varphi}$.

\noindent
{\bf Step 3.}
We claim that 
$$\ov{s}:=\de(\psi)(1-x\wt{\varphi}\psi)$$ 
is a trivialization of the relative Berezinian of the affine cubic over $S$, which extends to a trivialization of the relative dualizing
sheaf $\om_{\ov{X}/\ov{S}}$. Indeed, first we observe that $\om_{\ov{X}/\ov{S}}$ is a line bundle. Indeed, it is clear on the smooth part, while on the affine part $C^a_{q,\varphi}$
it follows from the fact that it is a divisor in $\A^{2|1}$.

Next, on the affine part over $S$, $\ov{s}$ differs by an invertible function from $\de(\th)$, so it is a generator. Also, near the infinite point we have
$$\ov{s}=(2\pi i)^{1/2}\cdot \de(\th)(1+(2\pi i)\frac{E_2}{12}\th\varphi),$$
which is a generator everywhere near $\ov{p}$ (since $E_2$ is regular at $q=0$).
The above formula also shows that $\ov{s}$ is a generator near infinity on the fiber over $q=0$. Hence, $\ov{s}$ can have zeros only in codimension $\ge 2$.
Note that the family $\ov{X}\to \ov{S}$ is the standard family of stable curves over $\{q\in\C \ |\ 0<|q|<1\}$, in particular, $\ov{X}_{\bos}$ is smooth.
Hence, $\ov{s}|_{\ov{X}_{\bos}}$ is non-vanishing everywhere, and so is $\ov{s}$.

\noindent
{\bf Step 4.}
We already equipped the smooth part of $\wt{C}_{q,\varphi}$ with a superconformal structure. Next, we need to check that the corresponding derivation
$\de:\OO_{\ov{X}}\to \om_{\ov{X}/\ov{S}}$ is regular near the singular point of the fiber over $q=0$ and induces an isomorphism of the odd components.
For any function $f$ on the smooth part of $C^a_{q,\varphi}$, we have
$$\de(f)=\de(\th)\cdot D(f)=(2\pi i)^{-1/2}\ov{s}\cdot (1-(2\pi i)\frac{E_2}{12}\th\varphi)D(f)=\ov{s}\cdot\wt{D}(f).$$
Hence, the regularity follows from the formulas \eqref{DR-eq}, \eqref{D3R-eq} and \eqref{Dpsi-eq} for $\wt{D}$.
Also, $\de(\psi)$ generates $\om_{C^a_{q,\varphi}}$ near the singular point, so $\varphi$ induces an isomorphism of the odd components. 

Since the corresponding bosonic family is a family of stable curves of genus $1$ with $1$ marked point, we obtain that $(\ov{X}\to \ov{S}, \ov{p})$
is a family of stable supercurves of genus $1$ with one NS puncture.

\noindent
{\bf Step 5.} Computation of the Kodaira-Spencer map on $S$.

Recall that for a family $\pi:X\to S$ of smooth supercurves with an NS puncture $p\sub X$, the Kodaira-Spencer map (KS map) 
is associated with the exact sequence
$$0\to \AA_{(X,p)/S}\to \AA_{X,p}\to \pi^{-1}\TT_S\to 0,$$
where $\AA_{X,p}$ is the sheaf of vector fields on $X$ preserving the structure distribution $\DD_{X/S}$ 
and the puncture $p$, $\AA_{(X,p)}$ is the similar subsheaf of the relative tangent
bundle. Looking at the connecting homomorphism on the cohomology, we get for each $s\in S$, a map
$$KS:T_sS\to H^1(X_s,\AA_{X_s,p}),$$
%where $D\sub X_s$ is the divisor asociated with the NS puncture $p$ (see ???), 
 
Note that the fact that the KS map is an isomorphism on the even components follows from the similar fact about the bosonization of our family.
It is still instructive to compute the KS map fully.

To compute the KS map we will use the covering of $X$ by $U_1=X-p$ and $U_2$, the formal neighborhood of $p$. For a given vector field $v$ on the base,
$KS(v_s)$ is given as the Cech cohomology class of the difference $\wt{v}_1-\wt{v}_2$, where $\wt{v}_i$ are the lifts of $v$ to $\AA_{X,p}(U_i)$, for $i=1,2$.
More precisely, we additionally use an isomorphism of sheaves 
$$\AA_{(X,p)/S}\simeq \TT_X/\DD_{X/S}(-D)\simeq \om_{X/S}^{-2}(-D)$$
(see \cite[Prop.\ 3.9]{FKP-stable}).
Furthermore, for $s\in S$, we have natural isomorphisms
$$\AA_{X_s}^+\simeq \om_{C_s}^{-1}\simeq \OO_{C_s}, \ \ (\om_{X_s}^{-2})^-\simeq L\simeq \OO_{C_s},$$
where $(C_s,L)$ is the underlying elliptic curve with the trivial spin structure $L\simeq \OO$. 
One can check that the corresponding trivializing sections of $\TT_X/\DD_{X/S}$ are $D^2$ and $\th D^2$.
These isomorphisms induce isomorphisms
$$H^1(\AA_{X_s,p})^+\simeq H^1(C_s,\OO_{C_s}(-p)), \ \ H^1(\AA_{X_s,p})^-\simeq H^1(C_s,\OO_{C_s}(-p)).$$

Thus, to compute $KS(\partial_\tau)$, we need to find a lifting of $\partial_\tau$ to a vector field on $U_1=X-p$ preserving the structure distribution
(over $U_2$ the obvious lift $\partial_\tau$ is in $\AA_{X,p}(U_2)$). One can check that such a lift is provided by
\begin{equation}\label{Dtau-eq}
D_{\tau}:=\partial_{\tau}+(\zeta_1+\th\varphi\dot{\zeta}_1)D^2+\frac{1}{2}\Psi_2\cdot D=\partial_\tau+(\zeta_1+\frac{1}{2}\th\varphi\dot{\zeta}_1)\partial_z+
\frac{1}{2}(\th\zeta'_1+\varphi\dot{\zeta}_1)\partial_\th.
\end{equation}
Thus, the corresponding cocycle with values in superconformal vector fields is represented by $(\zeta_1+\th\varphi\dot{\zeta}_1)D^2+\frac{1}{2}\Psi_2\cdot D$ on $U_{12}$.
Hence, we get that $KS(\partial_{\tau})$ is the class of $\zeta_1(z,\tau)=(2\pi i)^{-1}\frac{1}{z}+\ldots$ in $H^1(C_s,\OO(-p))$,
which is a generator of the latter group (note that when considering $X_s$ we set $\varphi=0$).

Similarly, as a lift of $\partial_\varphi$ to $U_1$ we can take
\begin{equation}\label{Deta-eq}
D_{\varphi}:=\partial_{\varphi}+[\varphi(\zeta_1^2-2\th \zeta_1)]D^2+[\zeta_1-\th\varphi\zeta_1\zeta'_1)]D=\partial_\varphi+(\varphi\zeta_1^2-\th\zeta_1)\partial_z+(\zeta_1-\th\varphi\zeta_1\zeta'_1)
\partial_\theta,
\end{equation}
while on $U_2$ we take $\partial_\varphi$. Hence $KS(\partial_{\varphi})$ is the class of $-2\zeta_1(z,\tau)$ in $H^1(C_s,\OO(-p))$.
This proves that the KS map is an isomorphism for any $s\in S$.

\noindent
{\bf Step 6.} Computation of the Kodaira-Spencer map on $\ov{S}$.

As shown in \cite[Sec.\ 6]{FKP-stable}, there is a version of the KS map for families of stable supercurves, 
$$T_{\ov{S},\ov{S}_0}|_{s_0}\to H^1(X_{s_0},\AA_{X_{s_0},p}),$$
where $\ov{S}_0\sub \ov{S}$ is the divisor of nodal curves and $s_0\in\ov{S}_0$. 
For the base of our family, near $q=0$ the basis of $T_{S,S_0}$ is given by $(2\pi i)q\partial_q=\partial_\tau$ and $\partial_\varphi$.
The fiber of our family over $q=0$ (and $\varphi=0$) is the split supercurve $C_0\times \A^{0|1}$ with the odd coordinate $\th=\psi$.
The formulas \eqref{DR-eq}, \eqref{D3R-eq} and \eqref{Dpsi-eq} show that the vector field $D$ extends to a global relative vector field on $\ov{X}\to \ov{S}$ and it is easy to check that
$D^2$ (resp., $\th D^2$) generates $\AA_{X_{s_0}}^+$ (resp., $\AA_{X_{s_0}}^-$) as an $\OO_{C_0}$-module.

To compute the KS map, first, we need to check that the vector fields $D_\tau$ and $D_\varphi$ given by \eqref{Dtau-eq} and \eqref{Deta-eq}, preserve the $Q$-subalgebra $A\sub \OO(X-p)$ (see Corollary \ref{quasimod-cor}) and hence, extend to derivations on $\ov{X}$.
We observe that we have the following relations
$$D_\tau D-D D_\tau=-\frac{1}{2}D(\Psi_2) D,$$
$$D_\varphi D+D D_{\varphi}=\zeta'_1\psi_1 D=(2\pi i)^{-1}(x-(2\pi i)^2\frac{E_2}{12})\psi_1 D.$$
Since $D(\Psi_2)\in A$ (see \eqref{D-Psi-eq}), it remains to prove that $D_\tau(\psi)$, $D_\tau(\Psi_2)$, $D_\varphi(\psi)$ and $D_\varphi(\Psi_2)$ are in $A$.
A computation using the relations
$$\dot{\zeta}_1+\zeta_1\zeta'_1=(2\pi i)^{-2}\frac{1}{2}\wp', \ \ \dot{\wp}+\zeta_1\wp'=(2\pi i)^{-1}(2\wp^2+(2\pi i)^{2}\frac{E_2}{6}-\frac{g_2}{3}), \ \
\dot{\wp}'+\zeta_1\wp''=(2\pi i)^{-1}3(\wp+(2\pi i)^2\frac{E_2}{12})\wp',$$                                                                        
gives
% Rescaling $D_{\tau}$ we get an operator $D_{q\partial_q}$ extending $q\partial_q$, which acts on
%the algebra of functions on $X-p$ by
%$$D_{q\partial_q}(\psi)=(x-\frac{E_2}{12})\psi,$$
%$$D_{q\partial_q}(x)=2x^2+\frac{E_2}{6}x-\frac{E_4}{36}-(\frac{E_2}{4}x^2+\frac{2E_2^2-E_4}{144}x+\frac{E_6}{288}+\frac{E_2E_4}{864})\wt{\varphi}\psi,$$
\begin{align*}
&D_\tau(\psi)=\frac{1}{2}\Psi_2-(2\pi i)^{-1}\varphi\frac{1}{2}D^2R, \\
&D_\tau(\Psi_2)=(2\pi i)^{-2}\frac{1}{2}D^3R-(2\pi i)^{-1}\frac{1}{2}(\wp-(2\pi i)^2\frac{E_2}{12})\Psi_2, \\
&D_\varphi(\psi)=0, \\
&D_\varphi(\Psi_2)=(2\pi i)^{-2}\frac{1}{2}D^2R[1-(2\pi i)^{-1}3\psi\varphi(R+(2\pi i)^2\frac{E_2}{12})].
\end{align*}

Thus, the values of the KS map on $D_\tau$ and $D_\varphi$ are still given by the same Cech cocycles as above.
It is well known that for the nodal cubic $C_0$, the class of $\frac{1}{z}$ is still a generator of $H^1(C_0,\OO(-p))$, which shows that the KS map
is still an isomorphism for the fiber over $q=0$. 
\end{proof}

\section{Cohomology and Gauss-Manin connection}\label{coh-GM-sec}

\subsection{Cohomology and differentials}

%By a supercurve we mean ???

Let $\pi:X\to S$ be a smooth proper supercurve over a smooth base $S$. 
We work either in the category of schemes over a field $k$ of characteristic zero and then assume that $S$ is affine, or
in the holomorphic category and assume that $S$ is Stein.
%Let us assume for simplicity that $S$ is either affine (in algebraic context) or Stein (in holomorphic context).
%The statements below can be easily generalized to the case of a general $S$.

Following \cite{Deligne-letter}, we use the identification of the structure distribution $\DD\sub \TT_{X/S}$ with $\om_{X/S}^{-1}$, and consider the dual
map $\de:\Om^1_{X/S}\to \om_{X/S}$, which induces a chain map of complexes
\begin{equation}\label{Om-om-map}
\Om^{\bullet}_{X/S}\to [\OO_X\rTo{\de}\om_{X/S}],
\end{equation}
where $\Om^{\bullet}_{X/S}$ is the relative de Rham complex.
It is well known that this map is a quasi-isomorphism in the analytic category (since both complexes are resolutions of the relative constant sheaf $\C_{X/S}$.
The following result gives a stronger algebraic verion of this statement.

\begin{prop}\label{de-rham-quasi-isom-prop}
The chain map \eqref{Om-om-map} fits into a short exact sequence
$$0\to K^{\bullet}_{X/S}\to \Om^\bullet_{X/S}\to [\OO_X\rTo{\de} \om_{X/S}]\to 0,$$
where the complex $K^\bullet_{X/S}$ is canonically contractible, i.e., there is a canonical homotopy between $0$ and identity maps
\end{prop}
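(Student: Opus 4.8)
The plan is to compute $K^\bullet_{X/S}=\ker\bigl(\Om^\bullet_{X/S}\to[\OO_X\rTo{\de}\om_{X/S}]\bigr)$ explicitly in superconformal coordinates and then exhibit the contracting homotopy directly. In degree $0$ the map $\OO_X\to\OO_X$ is the identity, so $K^0_{X/S}=0$. In degree $1$ the kernel is $\ker(\varphi)\sub\Om^1_{X/S}$, which by the computation recorded in the Conventions is the rank-$0|1$ (locally free) subsheaf spanned locally by the odd generator $dz+\th\,d\th=dz-d\th\cdot\th$; call this local generator $\mu$. In degree $\ge 2$ the target complex vanishes, so $K^{j}_{X/S}=\Om^j_{X/S}$ for $j\ge 2$. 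Thus $K^\bullet_{X/S}$ is the complex
$$0\to K^1_{X/S}\rTo{d}\Om^2_{X/S}\rTo{d}\Om^3_{X/S}\rTo{d}\cdots,$$
a subcomplex of the truncation $\sigma_{\ge 1}\Om^\bullet_{X/S}$ differing from it only in degree $1$. The first thing to check is that this really is a subcomplex, i.e. $d(K^1_{X/S})\sub\Om^2_{X/S}$, which is automatic, and — more to the point — that $K^1_{X/S}$ is exactly the degree-$1$ part, which is the coordinate computation above; one should also note $dz\we d\th=d\th\we dz$ in Deligne's sign convention, so $\Om^2_{X/S}$ is the free rank-$1|0$ module on $d\th\we d\th$ (an even symbol, since $d$ is even of degree $1$ and $d\th$ is odd of degree $1$), and similarly higher $\Om^j_{X/S}$ are free of rank $1$, alternating in parity.

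The key step is to produce the canonical contraction. I would use a super-analogue of the Poincaré-lemma homotopy adapted to the single odd variable: since along the fibres the only ``fibre direction'' that survives in $K^\bullet$ is the odd one $\th$, contraction with (a suitable multiple of) $\pa_\th$ followed by the relevant normalization should do it. Concretely, writing a relative form of degree $j\ge 2$ as $a\,(d\th)^{\we j}+b\,(d\th)^{\we(j-1)}\we\mu$ with $a,b\in\OO_X$ (using that $(d\th)^{\we j}$ and $(d\th)^{\we(j-1)}\we\mu$ span $\Om^j_{X/S}$), I would define $h:\Om^j_{X/S}\to\Om^{j-1}_{X/S}$ for $j\ge 2$, and $h:\Om^2_{X/S}\to K^1_{X/S}$, by an explicit formula of this antiderivative-in-$\th$ type, and check $dh+hd=\id$ on $K^\bullet$. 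The point of phrasing $K^\bullet$ as built from $\ker\varphi$ rather than from $\Om^1_{X/S}$ itself is precisely that it kills the class of $dz$ (equivalently of $\de(z)=\de(\th)\th$, which is not exact as a section of $\om_{X/S}$ but whose preimage $z$ does not exist globally), making the complex exact; so the homotopy must be set up so that its degree-$1$ value lands in $\ker\varphi$ and reconstructs $dz+\th d\th$ from $d\th\we d\th = d(d\th\cdot\th) + \cdots$ appropriately. That $h$ is independent of the choice of superconformal coordinates — the word ``canonical'' in the statement — is what makes it glue to a global homotopy; I would verify this by writing $h$ in a coordinate-free way using the isomorphism $\DD\simeq\om_{X/S}^{-1}$ and the contraction operator it provides, analogous to how $\de$ itself is coordinate-free.

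The main obstacle I expect is the bookkeeping of signs: in Deligne's convention the symbols $d\th$ carry both degree $1$ and odd parity, so $d\th\we d\th\neq 0$ (it is the even generator of $\Om^2$), and every Koszul sign in $dh+hd=\id$ combines a degree sign with a parity sign. Getting the normalization constants in the definition of $h$ right so that the telescoping works in every degree — including the boundary case $j=2\to 1$ where the target is the proper subsheaf $\ker\varphi$, not all of $\Om^1_{X/S}$ — is the delicate computation. Once the homotopy is written down correctly, exactness of $0\to K^\bullet_{X/S}\to\Om^\bullet_{X/S}\to[\OO_X\rTo{\de}\om_{X/S}]\to 0$ is immediate from the definition of $K^\bullet$ as the kernel, since $\Om^\bullet_{X/S}\to[\OO_X\rTo{\de}\om_{X/S}]$ is surjective in each degree (identity in degree $0$, $\varphi$ surjective in degree $1$, and $0\to 0$ in higher degrees). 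I would present the contraction, then deduce from $dh+hd=\id$ that $K^\bullet_{X/S}$ is acyclic, recovering as a corollary the quasi-isomorphism statement for \eqref{Om-om-map} but now with the explicit, algebraic, and coordinate-independent homotopy that Proposition~\ref{de-rham-quasi-isom-prop} asserts.
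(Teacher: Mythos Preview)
Your overall plan—describe $K^\bullet_{X/S}$ locally and write down an explicit contracting homotopy—is a legitimate route, but the local computations you sketch contain errors that would derail the construction. The generator $dz-d\th\cdot\th$ of $\ker\varphi$ is \emph{even} (both $dz$ and $d\th\cdot\th$ have parity $0$), so $K^1_{X/S}$ has rank $1|0$, not $0|1$; in fact it is canonically $\om^2_{X/S}$. Your sign $dz\wedge d\th=d\th\wedge dz$ is also wrong: with $dz$ of bidegree $(1,0)$ and $d\th$ of bidegree $(1,1)$, Deligne's rule gives the sign $(-1)^{1\cdot1+0\cdot1}=-1$, so $dz\wedge d\th=-d\th\wedge dz$ is a nonzero odd element and $\Om^2_{X/S}$ has rank $1|1$, not $1|0$. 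More generally every $\Om^j_{X/S}$ for $j\ge 1$ has rank $1|1$, with basis $(d\th)^j$ and $dz\,(d\th)^{j-1}$. These corrections would have to propagate through your homotopy formula; the ``antiderivative in $\th$'' idea as stated does not obviously land in $\ker\varphi$ in degree $1$, and the coordinate-independence step you defer to the end is where the real content lies.

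The paper sidesteps all of this by working coordinate-freely from the start. From the short exact sequence $0\to\om^2_{X/S}\to\Om^1_{X/S}\to\om_{X/S}\to 0$ one passes to exterior powers and obtains $0\to\om^{i+1}_{X/S}\to\Om^i_{X/S}\rTo{\wedge^i(\de)}\om^i_{X/S}\to 0$ for every $i\ge 1$. The contracting homotopy is then simply the composite $K^i=\Om^i_{X/S}\to\om^i_{X/S}\hookrightarrow\Om^{i-1}_{X/S}$ for $i>2$, together with the surjection $\Om^2_{X/S}\to\om^2_{X/S}=K^1$ in degree $2$: no coordinates, no sign bookkeeping, and canonicity is built in rather than verified after the fact. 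If you want to salvage your approach, first correct the ranks of the $\Om^j_{X/S}$, and then notice that the invariant description of your intended homotopy is exactly this map $\wedge^i(\de)$.
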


\begin{proof}
It is known (see \cite{Deligne-letter}) that we have an exact sequence of vector bundles on $X$,
\begin{equation}\label{om2-om-seq}
0\to \om^2_{X/S}\to \Om^1_{X/S}\rTo{\de} \om_{X/S}\to 0,
\end{equation}
where $\om^2_{X/S}$ has rank $1|0$ and $\om_{X/S}$ has rank $0|1$.
Thus, $K^\bullet_{X/S}$ is the following subcomplex of $\Om^\bullet_{X/S}$:
$$0\to \om^2_{X/S}\to \Om^2_{X/S}\to \Om^3_{X/S}\to \ldots.$$

Now we observe that by passing to the exterior powers,
the exact sequence \eqref{om2-om-seq} induces exact sequences
$$0\to \om^{i+1}_{X/S}\to \Om^i_{X/S}\rTo{\wedge^i(\de)} \om^i_{X/S}\to 0$$
for $i\ge 1$.
The obtained maps $\Om^2_{X/S}\to \om^2_{X/S}$, and
$$\Om^i_{X/S}\to \om^i_{X/S}\to \Om^{i-1}_{X/S}$$
for $i>2$, give the required homotopy.
\end{proof}

Thus, the de Rham cohomology of $X/S$ can be calculated as 
$$\HH^i_{X/S}:=R^i\pi_*[\OO_X\to \om_{X/S}].$$
These are locally free $\OO_S$-modules of finite rank (by the general theory of $D$-modules on supervarieties developed in \cite{Penkov}, this follows from the existence of a flat connection on $\HH^i_{X/S}$, see Sec.\ \ref{GM-sec} below). 

Let $i:S_0\hra S$ denote the bosonization of $S$, and let $\pi_0:C\to S_0$ denote the family of usual curves
underlying $\pi:X\to S$. Then the pull back induces a natural isomorphism 
$$i^*R\pi_*(\Om^\bullet_{X/S})\rTo{\sim} R\pi_{0*}(\Om^\bullet_{C/S_0})$$
(see \cite{P-super-derham}). Furthermore, since the cohomology sheaves $\HH^n_{X/S}$ are locally free, in fact, we have
\begin{equation}\label{bos-H-iso}
i^*\HH^n_{X/S}\simeq \HH^n_{C/S_0}.
\end{equation}

Assume now that we have a section $p\sub X$ of the projection $X\to S$.
Consider the flat covering $X=U_1\cup U_2$, where $U_1=X-p$, and $U_2$ is a formal neighborhood of $p$.
Let $\CC[\OO_X\to \om_{X/S}]$ be the corresponding relative Cech complex.

\begin{prop}\label{H1-prop}
The natural projection $\CC^1[\OO_X\to\om_{X/S}]\to \om_{X/S}(U_1)$ induces an isomorphism
$$\HH^1_{X/S}=R^1\pi_*[\OO_X\to \om_X]\rTo{\sim} \coker(\de:\OO_X(X-p)\to \om_{X/S}(X-p)).$$
\end{prop}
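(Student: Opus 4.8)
The plan is to compute the hypercohomology $\HH^1_{X/S}$ via the Čech complex $\CC^{\bullet}[\OO_X \to \om_{X/S}]$ associated to the flat affine cover $X = U_1 \cup U_2$ with $U_1 = X - p$ and $U_2$ a formal neighborhood of $p$. The total complex $\CC^{\bullet}[\OO_X \to \om_{X/S}]$ has terms $\CC^0 = \OO_X(U_1) \oplus \OO_X(U_2)$ in degree $0$; in degree $1$ it is $\om_{X/S}(U_1) \oplus \om_{X/S}(U_2) \oplus \OO_X(U_{12})$ (the last summand being the Čech degree-$1$ part of $\OO_X$, placed in total degree $1$); and in degree $2$ it is $\om_{X/S}(U_{12})$. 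The differential from $\CC^0$ to $\CC^1$ sends $(a_1,a_2)$ to $(\de a_1, \de a_2, a_1 - a_2)$ (Čech difference over $U_{12}$), and the differential from $\CC^1$ to $\CC^2$ sends $(\omega_1, \omega_2, b)$ to $\omega_1 - \omega_2 - \de b$. Here I use that $U_2$, being a formal superdisk, has no higher cohomology, and that $U_{12}$ is again affine/Stein so that the Čech computation is valid for hypercohomology; these are the standard facts permitting one to compute $R\pi_*$ by this two-term cover.

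The key computation is then purely linear algebra on this complex. First I would show the projection map $\CC^1 \to \om_{X/S}(U_1)$, $(\omega_1,\omega_2,b) \mapsto \omega_1$, descends to a map on $\HH^1$: it kills the image of $\CC^0 \to \CC^1$ precisely when $\de a_1 = 0$; but a section $a_1 \in \OO_X(U_1)$ with $\de a_1 = 0$ is locally constant along the distribution, hence (since $\de$ has the kernel described in the conventions section, and a global function constant along $\DD$ on a connected supercurve minus a point is a constant from the base) lies in $\OO_S$, so its image $\omega_1 = \de a_1 = 0$ — wait, more carefully: I need the composite to land in the cokernel $\om_{X/S}(U_1)/\de(\OO_X(U_1))$, and then coboundaries $(\de a_1, \de a_2, a_1 - a_2)$ map to $\de a_1 \in \de(\OO_X(U_1))$, which is zero in the cokernel. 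So the map $\CC^1 \to \coker(\de \colon \OO_X(U_1) \to \om_{X/S}(U_1))$ visibly factors through $\HH^1$. Surjectivity is immediate: given $\omega_1 \in \om_{X/S}(U_1)$, since on the formal disk $U_2$ the map $\de$ is surjective (one can integrate formal power series against $D$), write $\omega_1|_{U_{12}} = \de(c)$ for some $c \in \OO_X(U_{12})$ — no, better: lift $\omega_1|_{U_{12}}$ to $\omega_2 \in \om_{X/S}(U_2)$ arbitrarily is not possible directly, so instead produce a $1$-cocycle $(\omega_1, 0, b)$ by choosing $b \in \OO_X(U_{12})$ with $\de b = \omega_1|_{U_{12}}$ (possible since $\de$ is surjective on the formal disk, restricted to $U_{12}$), which is closed and maps to $\omega_1$. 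Injectivity is where the main work lies.

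For injectivity, suppose $(\omega_1, \omega_2, b) \in \CC^1$ is a cocycle — so $\omega_1|_{U_{12}} - \omega_2|_{U_{12}} = \de b$ — whose image $\omega_1$ lies in $\de(\OO_X(U_1))$, say $\omega_1 = \de(a_1)$ with $a_1 \in \OO_X(U_1)$. Subtracting the coboundary of $(a_1, 0)$ reduces to the case $\omega_1 = 0$, so $\omega_2|_{U_{12}} = -\de b$. I then want to split $b = b_1 - b_2$ with $b_i \in \OO_X(U_i)$ and $\de b_2 = -\omega_2$ on $U_2$, which would exhibit $(\omega_1,\omega_2,b)$ as the coboundary of $(b_1, -b_2)$ — but this requires knowing that $H^1$ of $\OO_X$ under this cover, i.e. $\coker(\OO_X(U_1) \oplus \OO_X(U_2) \to \OO_X(U_{12}))$, is controlled, and more precisely requires the relation to the genuine de Rham $H^1$. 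The cleanest route is to instead invoke Proposition \ref{de-rham-quasi-isom-prop}: $\HH^1_{X/S}$ is the abutment, and one knows $\HH^1_{X/S}$ is locally free of the expected rank with $i^*\HH^1_{X/S} \simeq \HH^1_{C/S_0}$ by \eqref{bos-H-iso}; the analogous cokernel statement for ordinary curves $\om_C(C-p)/d\OO_C(C-p) \cong \HH^1_{C/S_0}$ is the classical fact cited from \cite{AH}. So one reduces, by Nakayama over $S$ (both sides are finitely generated $\OO_S$-modules, the map is $\OO_S$-linear and an isomorphism modulo the nilpotents of $S$), to the classical statement.

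The hard part will be setting up the reduction to the bosonic case cleanly, i.e. verifying that the cokernel $\coker(\de \colon \OO_X(X-p)\to \om_{X/S}(X-p))$ is a finitely generated (indeed locally free) $\OO_S$-module whose formation commutes with the base change $i\colon S_0 \hra S$, so that Nakayama applies. An alternative, more self-contained argument — the one I would actually write out — avoids Nakayama: one computes directly that the two-term Čech total complex has $H^0 = \OO_S$, $H^2 = 0$ (since $\de$ is surjective on the formal disk $U_2$, every element of $\om_{X/S}(U_{12})$ is $\de$ of something on $U_{12}$ modulo the restrictions from $U_1$ and $U_2$ — this needs the surjectivity of $\de$ on $U_1$ in the relevant quotient, which follows from the exact sequence \eqref{om2-om-seq} and a cohomology vanishing), and then the map to $\coker(\de_{U_1})$ is an isomorphism by the explicit cocycle manipulations sketched above, using only that $\de$ is surjective on the formal superdisk and on $U_{12}$. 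I expect the surjectivity of $\de$ on these affine pieces — which ultimately rests on \eqref{om2-om-seq} together with the vanishing of the relevant $H^1$ on an affine/Stein supercurve — to be the one genuinely delicate input, and I would state it as a preliminary lemma before the main cocycle chase.
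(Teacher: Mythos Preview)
Your setup of the \v{C}ech total complex is fine, and the well-definedness and injectivity arguments are essentially correct. The gap is in surjectivity, and it propagates to your ``self-contained'' alternative.

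You claim that $\de$ is surjective on $U_{12}$ because it is surjective on the formal disk $U_2$. This is false: on the punctured formal disk, writing $\de(a(z)+\th b(z))=[dz|d\th]\cdot(b(z)+\th a'(z))$, the equation $a'(z)=h(z)$ has no formal Laurent solution when $h$ has a nonzero $z^{-1}$-coefficient. Thus $\coker(\de_{U_{12}})\simeq\OO_S$ via the residue, and your cocycle $(\omega_1,0,b)$ with $\de b=\omega_1|_{U_{12}}$ need not exist. The same error underlies your claim that $H^2$ of the total complex vanishes: that $H^2$ is exactly $\HH^2_{X/S}$, which is locally free of rank $1$ (cf.\ \eqref{bos-H-iso}), not zero. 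Your appeal to \eqref{om2-om-seq} only gives surjectivity of $\Om^1_{X/S}\to\om_{X/S}$ on affines, not of $\de=\varphi\circ d:\OO_X\to\om_{X/S}$.

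What you actually need for surjectivity is the weaker statement that $\om_{X/S}(U_1)|_{U_{12}}\subset \de(\OO(U_{12}))+\om_{X/S}(U_2)$, and this is the genuine content of the proof. The paper establishes it by a rank argument: the residue gives $\om_{X/S}(U_{12})/(\de(\OO(U_{12}))+\om_{X/S}(U_2))\simeq\OO_S$, while the further quotient by $\om_{X/S}(U_1)$ is $\HH^2_{X/S}$, also locally free of rank $1$; since the surjection $\OO_S\twoheadrightarrow\HH^2_{X/S}$ between rank-$1$ locally free $\OO_S$-modules must be an isomorphism, the image of $\om_{X/S}(U_1)$ in the first quotient is already zero. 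Once this containment is known, your cocycle chase goes through (taking $\omega_2\in\om_{X/S}(U_2)$ rather than $\omega_2=0$). Your Nakayama route could in principle be made to work, but you would still need to identify what the map and the cokernel become on the bosonization, which is not as straightforward as restricting to $[\OO_C\to\om_C]$ since $\om_{X/S}|_C$ is the spin bundle, not $\om_C$.
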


\begin{proof} The proof is similar to the classical case (cf.\ \cite[Lemma 5.1]{P-bu} for a proof of a similar statement for the separating node degeneration of curves). 
Namely, taking the quotient of the Cech complex of $[\OO_X\to \om_{X/S}]$ by an exact subcomplex, we get the quasi-isomorphic complex
$$\OO(U_1)\to \om_{X/S}(U_1)\to \om_{X/S}(U_{12})/(\de(\OO(U_{12})+\om_{X/S}(U_2)).$$
It remains to check that the image of
the restriction map $\om_{X/S}(U_1)\to \om_{X/S}(U_{12})$ is contained in
$\de(\OO(U_{12}))+\om_{X/S}(U_2)$. This can be checked as follows.
It is easy to check using local formal relative coordinates that  
$$\om_{X/S}(U_{12})/(\de(\OO(U_{12}))+\om_{X/S}(U_2))\simeq \OO_S$$
(the isomorphism is given by the residue, see \cite[Sec.\ 2.8]{FKP}).
On the other hand, 
%the quasi-isomorphism \eqref{Om-om-map}
%connecting homomorphism associated with the exact sequence
%$$0\to \C_{X/S}\to \OO_X\rTo{\varphi} \om_{X/S}\to 0$$
we have an isomorphism
$$\om_{X/S}(U_{12})/(\de(\OO(U_{12}))+\om_{X/S}(U_1)+\om_{X/S}(U_2))\simeq \HH^2_{X/S},$$
which is a locally free $\OO_S$-module of rank $1$.
Since the natural projection
$$\OO_S\simeq\om_{X/S}(U_{12})/(\de(\OO(U_{12}))+\om_{X/S}(U_2))\rTo{pr} \om_{X/S}(U_{12})/(\de(\OO(U_{12}))+\om_{X/S}(U_1)+\om_{X/S}(U_2))\simeq \HH^2_{X/S},$$
is surjective, it is an isomorphism. This implies that the map $pr$ is an isomorphism, which implies the assertion.
\end{proof}

\subsection{Gauss-Manin connection: generalities}\label{GM-sec}

Recall that the Gauss-Manin connection $\nabla^{GM}$ on the de Rham cohomology $R^i\pi_*(\Om^\bullet_{X/S})$ arises as a connecting homomorphism associated with
the exact sequence of complexes 
$$0\to \Om^{\bullet-1}_{X/S}\ot \pi^*\Om^1_S\to \Om^\bullet_X/(\Om^{\bullet-2}_X\cdot \pi^*\Om^2_S)\to \Om^\bullet_{X/S}\to 0$$
(see \cite{P-super-derham}). 
%Note that it is convenient to view $\nabla^{GM}$ as a right connection.

It is easy to see that isomorphisms \eqref{bos-H-iso} are
compatible with the Gauss-Manin connections.

\begin{prop}\label{GM-recipe-prop} Let $p\sub X$ be a section of $X\to S$.
Under the identification $\HH^1_{X/S}\simeq \coker(\de:\OO_X(X-p)\to \om_{X/S}(X-p))$ (see Proposition \ref{H1-prop}), the Gauss-Manin connection
$$\nabla^{GM}:\HH^1\to \HH^1\ot \Om^1_S$$
can be computed as follows. Start with a representative $\Phi\in \om_{X/S}(U_1)$. Then there exists a unique
$\om\in \Om^1_{X/S}(U_1)$ such that $\de(\om)=\Phi$ and $d_{X/S}\om=0$.
%$$\om_1\in \Om^1_{X/S}(U_1), \ \ \om_2\in \Om^1_{X/S}(U_2), \ \ \de\in \OO(U_{12}),$$
%$$\de(\om_1)=\Phi, \ \ \om_1|_{U_{12}}=\om_2+d_{X/S}(\de), \ \ d_{X/S}(\om_1)=0,$$
%d_{X/S}(\om_2)=0.$$
Lift $\om$ to some $\wt{\om}\in \Om^1_X(U_1)$ and consider its total differential $d\wt{\om}\in \Om^2_X(U_1)$.
Then $d\wt{\om}\mod \Om^2_S$ is in the image of $\Om^1_{X/S}(U_1)\ot \Om^1_S$ and we have
$$\nabla^{GM}([\Phi])=[(\de\ot \id)(d\wt{\om}\mod \Om^2_S)],$$
where on the right we consider the map $\de\ot\id: \Om^1_{X/S}(U_1)\ot \Om^1_S\to \om_{X/S}(U_1)\ot\Om^1_S$. 
\end{prop}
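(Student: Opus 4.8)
The plan is to trace through the definition of the Gauss--Manin connection as a connecting homomorphism and to unwind it in Čech terms using the flat covering $X = U_1 \cup U_2$. First I would recall from Proposition \ref{de-rham-quasi-isom-prop} that $\HH^1_{X/S}$ is computed as $R^1\pi_*[\OO_X \to \om_{X/S}]$, and via the canonical contraction of $K^\bullet_{X/S}$ this agrees with $R^1\pi_*\Om^\bullet_{X/S}$, compatibly with filtrations; then, by Proposition \ref{H1-prop}, a class is represented by a single element $\Phi \in \om_{X/S}(U_1)$ modulo $\de(\OO(U_1))$. To compute $\nabla^{GM}$ I would represent the same class in the de Rham picture: the point is that giving $\Phi$ is equivalent to giving a closed relative $1$-form $\om \in \Om^1_{X/S}(U_1)$ with $\de(\om) = \Phi$. \emph{Existence and uniqueness of such an $\om$} is exactly the statement that $\varphi: \Om^1_{X/S} \to \om_{X/S}$ (i.e.\ $\de$ on relative $1$-forms) becomes an isomorphism after restricting to closed forms on $U_1$; I would deduce this from the short exact sequence \eqref{om2-om-seq}, $0 \to \om^2_{X/S} \to \Om^1_{X/S} \to \om_{X/S} \to 0$, together with the vanishing of $H^0$ and $H^1$ of the relevant sheaves twisted appropriately — concretely, $\ker\varphi$ is spanned by the closed form $dz + \th\, d\th$ in local coordinates, so a closed relative $1$-form is determined by its image under $\de$ up to an element of $\OO_S \cdot (dz+\th d\th)$, and one checks this ambiguity is killed once we also require exactness, i.e.\ that $\om$ lies in $d_{X/S}\OO(U_1)$ modulo the pieces that die in cohomology. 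This is the step I expect to require the most care, since it is where the special structure of supercurves (the form $dz + \th d\th$ generating $\ker\varphi$) enters, and it must be phrased so that it works in families over $S$.

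With $\om$ in hand, the rest follows the classical recipe of \cite{Manin-conn}. I would lift $\om$ to an absolute form $\wt\om \in \Om^1_X(U_1)$ and form $d\wt\om \in \Om^2_X(U_1)$. Working modulo $\pi^*\Om^2_S$ (as in the defining sequence $0 \to \Om^{\bullet-1}_{X/S}\ot\pi^*\Om^1_S \to \Om^\bullet_X/(\Om^{\bullet-2}_X\cdot\pi^*\Om^2_S) \to \Om^\bullet_{X/S} \to 0$ for the Gauss--Manin connection), the fact that $\om$ is $d_{X/S}$-closed means $d\wt\om$ has no component in $\Om^2_{X/S}$, so $d\wt\om \bmod \pi^*\Om^2_S$ lands in $\Om^1_{X/S}(U_1)\ot\pi^*\Om^1_S$. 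This is by definition a Čech $0$-cochain representing (the image under the connecting map of) the class; since over $U_2$ the class $\Phi$ extends (being a section of the quotient that vanishes there after the reduction of Proposition \ref{H1-prop}), the Čech differential only sees the $U_1$ term, and applying $\de \ot \id$ gives the asserted formula $\nabla^{GM}([\Phi]) = [(\de\ot\id)(d\wt\om \bmod \Om^2_S)]$ as an element of $\coker(\de: \OO(U_1) \to \om_{X/S}(U_1)) \ot \Om^1_S = \HH^1 \ot \Om^1_S$.

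Finally I would check independence of the choices: a different lift $\wt\om' = \wt\om + \pi^*\eta$ with $\eta \in \Om^1_S$ changes $d\wt\om$ by $d_{X/S}(\pi^*\eta) \in \Om^1_{X/S}\ot\pi^*\Om^1_S$, which applying $\de\ot\id$ contributes $\de(1)\cdot\eta = 0$ since $\de$ kills constants — wait, more precisely $\pi^*\eta$ has coefficient functions pulled back from $S$, so $d_{X/S}$ of it is $\sum \de^{rel}(\pi^*a_i)\wedge\eta_i = 0$, giving no change; and replacing $\om$ by another representative of the same $\HH^1$-class, i.e.\ adding $d_{X/S}g$ for $g \in \OO(U_1)$, changes $\Phi$ by $\de(g) \in \de(\OO(U_1))$ and changes $d\wt\om$ by $d(dg) = 0$ after suitable lift, hence does not change the class in $\HH^1\ot\Om^1_S$. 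I would also note that compatibility with the bosonization \eqref{bos-H-iso} provides a sanity check: restricting to $S_0$ recovers the classical Gauss--Manin recipe on $\HH^1_{C/S_0}$, which is consistent because $\de$ reduces to the ordinary de Rham differential on the underlying curve. The main obstacle, as flagged, is the clean proof of existence and uniqueness of the closed lift $\om$ of $\Phi$ in the relative de Rham complex on $U_1$; everything downstream is a formal manipulation of the connecting-homomorphism definition.
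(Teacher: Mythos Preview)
Your overall strategy matches the paper's: represent the class by a $d_{X/S}$-closed relative $1$-form on $U_1$, lift to an absolute $1$-form, take $d$, and project via $\de\ot\id$. The gap is exactly where you flag it, and your tentative argument there is off. The ambiguity in lifting $\Phi$ through $\de$ is all of $\om^2_{X/S}(U_1)$ (locally $\OO_X$-multiples of $dz+\th\,d\th$, not $\OO_S$-multiples), and the phrase ``once we also require exactness'' confuses closed with exact. The paper's argument is a direct application of Proposition~\ref{de-rham-quasi-isom-prop}: since $K^\bullet_{X/S}$ is contractible as a complex of sheaves and $U_1$ is affine (Stein), the complex of sections $K^\bullet_{X/S}(U_1)$ is acyclic, which says precisely that $d_{X/S}$ carries $\ker(\de)=\om^2_{X/S}(U_1)$ \emph{isomorphically} onto $\ker\bigl(d:\Om^2_{X/S}(U_1)\to\Om^3_{X/S}(U_1)\bigr)$. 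Uniqueness is then immediate (a $d_{X/S}$-closed element of $\ker\de$ must vanish), and for existence one lifts $\Phi$ arbitrarily to $\om_0$, writes the closed form $d_{X/S}\om_0$ as $d_{X/S}\kappa$ for a unique $\kappa\in\om^2_{X/S}(U_1)$, and sets $\om=\om_0-\kappa$.

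For the connecting-homomorphism portion your outline is right, but the reason ``the \v{C}ech differential only sees the $U_1$ term'' deserves more care. The paper makes this precise by first lifting the full \v{C}ech cocycle $(f,\Phi,\a)\in\CC^1[\OO_X\to\om_{X/S}]$ (supplied by Proposition~\ref{H1-prop}) to a cocycle $(f,\om_1,\om_2)\in\CC^1(\Om^\bullet_{X/S})$ via Proposition~\ref{de-rham-quasi-isom-prop}, noting that $\om_1$ is forced to be the unique closed lift $\om$, and only then lifting to $\CC^1(\Om^\bullet_X)$ and differentiating; the projection back through Proposition~\ref{H1-prop} then discards the $U_2$ and $U_{12}$ contributions automatically. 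Your independence-of-choices checks are correct but become superfluous once the formula is identified with the connecting homomorphism itself.
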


\begin{proof}
By Proposition \ref{de-rham-quasi-isom-prop}, the map $\de:\Om^1_{X/S}(U_1)\to \om_{X/S}(U_1)$ is surjective and its kernel maps by $d_{X/S}$ isomorphically to 
$\ker(d:\Om^2_{X/S}(U_1)\to \Om^3_{X/S}(U_1))$. This implies that every $\Phi\in \om_{X/S}(U_1)$ lifts uniquely to
$\om\in \Om^1_{X/S}(U_1)$ such that $d_{X/S}(\om)=0$.

By Proposition \ref{H1-prop}, the class of $\Phi$ in $\HH^1_{X/S}$ corresponds to a Cech cocycle
$$(f,\Phi,\a)\in \OO(U_{12})\oplus \om_{X/S}(U_1)\oplus \om_{X/S}(U_2)=\CC^1[\OO_X\to \om_{X/S}],$$
where 
$\de(f)=-\Phi|_{U_{12}}+\a|_{U_{12}}$.
%$\de(f)=\Phi|_{U_{12}}-\a|_{U_{12}}$.
By Proposition \ref{de-rham-quasi-isom-prop}, it can be lifted to a Cech cocyle
$$(f,\om_1,\om_2)\in \OO(U_{12})\oplus \Om^1_{X/S}(U_1)\oplus \Om^1_{X/S}(U_2)=\CC^1(\Om^\bullet_{X/S}),$$
where 
$$d_{X/S}f=-\om_1|_{U_{12}}+\om_2|_{U_{12}}, \ d_{X/S}\om_1=d_{X/S}\om_2=0, \ \de(\om_1)=\Phi.$$ 
%$$d_{X/S}f=\om_1|_{U_{12}}-\om_2|_{U_{12}}, \ d_{X/S}\om_1=d_{X/S}\om_2=0, \ \de(\om_1)=\Phi.$$ 
Hence, $\om_1=\om$ is the unique lift of $\Phi$ to a $d_{X/S}$-closed $1$-form.

Next, by the definition of the connecting homomorphism, we have to lift $(f,\om_1,\om_2)$ to Cech cochain
$$(f,\wt{\om}_1,\wt{\om}_2)\in \OO(U_{12})\oplus \Om^1_X(U_1)\oplus \Om^1_X(U_2)=\CC^1(\Om^\bullet_X),$$
and consider its differential 
$(-df+\wt{\om_2}-\wt{\om}_1,d\wt{\om}_1,d\wt{\om}_2)\in \CC^2(\Om^\bullet_X)$.
%$(df+\wt{\om_2}-\wt{\om}_1,d\wt{\om}_1,d\wt{\om}_2)\in \CC^2(\Om^\bullet_X)$.
It will then belong to the subcomplex $\Om^{\bullet-1}_{X/S}\ot \pi^*\Om^1_S$. Applying the projection \eqref{Om-om-map} and using Proposition \ref{H1-prop},
we arrive at the claimed description of $\nabla^{GM}([\Phi])$.
\end{proof}

\begin{remark}
The formula of Proposition \ref{GM-recipe-prop} should be compared with the following classical recipe
for computing the Gauss-Manin connection on 
$$\HH^1_{C/S_0}\simeq \coker(d:\OO_C(C-p)\to \Om^1_{C/S_0}(C-p))$$
for a usual family of curves $C\to S_0$ over a smooth even (affine or Stein) base $S_0$, 
with the relative point $p:S_0\to C$. One simply lifts a representative $\om\in\Om^1_{C/S_0}(C-p)$
to $\wt{\om}\in \Om^1_C$. Then $d\wt{\om}\mod \Om^2_{S_0}$ is in the image of $\Om^1_{C/S_0}(C-p)\ot \Om^1_{S_0}$
and is equal to $\nabla^{GM}([\om])$. 
\end{remark}

\begin{remark}
Since the functor $i^*$ induces an equivalence between $D$-modules on $S$ and on $S_0$ (see \cite[Sec.\ 2.1]{Penkov}),
the isomorphism \eqref{bos-H-iso}
 implies that a basis of $\HH^1_{X/S}$, horizontal with respect to $\nabla^{GM}$, is uniquely determined by
the corresponding basis of $\HH^1_{C/S_0}$ obtained by restriction.
\end{remark}

\subsection{Gauss-Manin-connection for supercurves of genus $1$}

Here we will describe the Gauss-Manin connection on the cohomology of the family $X\to S$ of supercurves of genus $1$ considered in Section \ref{g1-sec}.

We have a global trivialization of $\om_{X/S}$ given by $s:=\de(\th)$.
Thus, the map $f\mapsto s\cdot f$ gives an isomorphism
of $\coker(D:\OO(X-p))$ with $\HH^1_{X/S}\simeq \coker(\OO(X-p)\to \om_{X/S}(X-p))$.
The formula for the action of $D$ on $\OO(X-p)$ (see Proposition \ref{fun-prop}) immediately implies that
$$s\Psi_1,\ s\Psi_2$$
give a basis of $\HH^1$ (as an $\OO_S$-module).

\begin{theorem}\label{GM-thm}
One has 
$$\nabla^{GM}_{\varphi}(s\Psi_1)=\nabla^{GM}_{\varphi}(s\Psi_2)=0,$$
$$\nabla^{GM}_{\tau}(s\Psi_1)=s\Psi_2, \ \ \nabla^{GM}_{\tau}(s\Psi_2)=0.$$
\end{theorem}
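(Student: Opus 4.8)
The plan is to apply the recipe of Proposition~\ref{GM-recipe-prop} directly to the representatives $\Phi_1 = s\Psi_1 = \de(\th)\Psi_1$ and $\Phi_2 = s\Psi_2 = \de(\th)\Psi_2$, using the fact that $\de(\th) = [dz|d\th]$ and the description of $\ker(\varphi)\sub\Om^1_{X/S}$ as spanned by $dz+\th d\th$ from the Conventions. First I would find, for each $\Phi_i$, the unique $d_{X/S}$-closed lift $\om_i \in \Om^1_{X/S}(U_1)$ with $\de(\om_i) = \Phi_i$. Since $\de(df\cdot g) = [dz|d\th]D(f)g$, and using the formulas $D(\Psi_1) = 1 + \varphi\Psi_2$, $D(\Psi_2) = (2\pi i)^{-1}(R+\eta_1)$ from \eqref{D-Psi-eq}, together with the known periods of $dz$ and $\zeta_1'(z,\tau)dz$ on the underlying curve, I expect the lifts to be (up to adding multiples of $dz+\th d\th$, which is killed by $\de$) essentially $\om_1 = d\Psi_1 - (\text{correction in }\varphi)$ giving rise to the classical $e_0$, and $\om_2 = d\Psi_2$-type expressions giving rise to $f_0$; more precisely the $\om_i$ should reduce modulo $\varphi$ and modulo exact forms to the normalized differentials $e_0, f_0$ of \eqref{e0-f0-eq}. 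The closedness $d_{X/S}\om_i = 0$ is automatic for $\om_i$ of the form $d(\text{function}) + (\text{closed})$, so the main content is getting $\de(\om_i)$ right on the nose.

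Next I would lift each $\om_i$ to $\wt\om_i \in \Om^1_X(U_1)$ by simply allowing the same formula but now interpreting $d$ as the total de Rham differential on $X$ (not relative to $S$), then compute $d\wt\om_i \bmod \Om^2_S$. Here the $\tau$-dependence of $\zeta_1, \wp, E_2$ etc. produces terms involving $d\tau$ and $d\varphi$: writing $\wt\om_i = \om_i(z,\th,\tau,\varphi)$, we get $d\wt\om_i \equiv \pa_\tau(\om_i)\we d\tau + \pa_\varphi(\om_i)\we d\varphi \bmod (\text{relative 2-forms}, \Om^2_S)$, and then apply $\de\ot\id$. For the $d\varphi$-component I expect everything to be exact in the relative direction — i.e. $\de$ of the $d\varphi$-coefficient lies in $\de(\OO(X-p))$, so $\nabla^{GM}_\varphi$ kills both classes; this should follow from $D_\varphi(\psi) = 0$ and the structure of the $\varphi$-derivatives computed in Step~5 of the proof of Theorem~\ref{stable-thm}. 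For the $d\tau$-component, I would use $\dot\zeta_1 = \pa_\tau\zeta_1$, the relation $\dot\zeta_1 + \zeta_1\zeta_1' = (2\pi i)^{-2}\tfrac12\wp'$ (which already appears in Step~6), and $\dot\eta_1$-formulas to identify the class of the $d\tau$-coefficient: for $\Phi_1$ it should land on a representative cohomologous to $s\Psi_2$, and for $\Phi_2$ on something exact (cohomologous to $0$).

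An alternative, possibly cleaner route: use the last Remark in Section~\ref{GM-sec}, which says a $\nabla^{GM}$-horizontal basis of $\HH^1_{X/S}$ is determined by its restriction to the bosonization. On the underlying family of elliptic curves $\C/\La_\tau$, the Gauss-Manin connection in the basis $e_0 = (1 - \tau\zeta_1')dz$, $f_0 = \zeta_1'dz$ dual to $(\a,\b)$ is classical: since $(e_0, f_0)$ is dual to the topological basis $(\a,\b)$ which is $\tau$-independent, $(e_0,f_0)$ is itself horizontal for $\nabla^{GM}$ on $H^1_{dR}$ — wait, that is not quite it since $\a, \b$ do depend on $\tau$ through the lattice; rather the standard computation gives $\nabla^{GM}_\tau e_0 = 0$, $\nabla^{GM}_\tau f_0 = 0$ is false, and one must track $\pa_\tau \zeta_1'$. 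I would instead just cite/recall the classical fact that in the basis given by $dz$ and a suitable second-kind differential, $\nabla^{GM}_\tau(dz\text{-class})$ equals the other basis element up to normalization and the other is horizontal, matching the claimed $\nabla^{GM}_\tau(s\Psi_1) = s\Psi_2$, $\nabla^{GM}_\tau(s\Psi_2) = 0$. Then the Remark upgrades this to the super statement, provided I check $s\Psi_1 \bmod \varphi$ and $s\Psi_2 \bmod \varphi$ restrict to $e_0$- and $f_0$-type classes (up to rescaling), which is immediate from the $z$-expansions $\Psi_1 = \th + O(z) \pmod\varphi$, $\Psi_2 = (2\pi i)\th/z^2 + O(1) \pmod\varphi$ and the fact that $s = \de(\th)$ restricts to $dz$ on the bosonization.

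The main obstacle I anticipate is the bookkeeping in the direct approach: correctly computing the unique closed lifts $\om_i$ (the ambiguity by $dz+\th d\th$ must be fixed so that $d_{X/S}\om_i = 0$ \emph{exactly}, not just after applying $\de$), and then carefully extracting the $d\tau$- and $d\varphi$-components of the \emph{total} differential while discarding $\Om^2_S$-terms and relative-exact terms — the interplay of the $\th$-dependence with the total differential (so that $d(\th\,g) = d\th\cdot g + \th\,dg$ with $d\th$ now an honest differential, not zero) is where sign errors and missed terms are most likely. If the direct computation becomes unwieldy, the bosonization shortcut via the final Remark of Section~\ref{GM-sec} reduces everything to the classical elliptic-curve computation plus the trivial check on $z$-expansions, and I would fall back on that.
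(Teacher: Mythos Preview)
Your direct approach is the one the paper takes, but your lifting step contains a genuine error. You propose to lift $\om_i\in\Om^1_{X/S}(U_1)$ to $\wt\om_i\in\Om^1_X(U_1)$ by ``simply allowing the same formula but interpreting $d$ as the total differential.'' This does not give a well-defined form on $U_1\subset X$: under the deck transformation $(z,\th)\mapsto(z+\tau+\th\varphi,\th+\varphi)$ the \emph{total} differentials transform as $dz\mapsto dz+d\tau+d\th\cdot\varphi+\th\,d\varphi$ and $d\th\mapsto d\th+d\varphi$, so a formula that is $\Z^2$-invariant as a relative form is in general not invariant as a total form. The paper deals with this by solving explicitly for additional $d\tau$- and $d\varphi$-terms that restore invariance, e.g.
\[
\wt\om_1=\om_1+d\varphi\cdot\th\zeta_1-d\tau\cdot(\zeta_1+2\th\varphi\dot\zeta_1),
\]
and it is precisely the new coefficient $-\zeta_1$ of $d\tau$ here (not $\pa_\tau$ applied to the coefficients of $\om_1$) that, after taking $d$ and applying $\de\otimes\id$, contributes the term $s\,\th\zeta_1'$ needed to assemble $s\Psi_2$. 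Your proposed formula $d\wt\om_i\equiv\pa_\tau(\om_i)\wedge d\tau+\pa_\varphi(\om_i)\wedge d\varphi$ yields only $-s\varphi\dot\zeta_1$ in the $d\tau$-slot, and this is \emph{not} cohomologous to $s\Psi_2$: the discrepancy $s(2\varphi\dot\zeta_1+\th\zeta_1')$ would be exact only if $\zeta_1\in\OO(X-p)$, which fails by \eqref{zeta1-eq}.

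Your bosonization alternative is not the paper's route; it can be made to work, but not quite as you state it. The Remark only asserts that a \emph{horizontal} basis is determined by its restriction to $S_0$, not that the connection on arbitrary elements is so determined; in particular it says nothing directly about $\nabla^{GM}_\varphi$. The missing ingredient is a parity argument. Since $(s\Psi_1,s\Psi_2)$ is an even $\OO_S$-basis of $\HH^1_{X/S}$ and $\nabla^{GM}_\varphi$ is odd, each $\nabla^{GM}_\varphi(s\Psi_i)$ lies in $\varphi\cdot\HH^1_{X/S}$; flatness gives $2(\nabla^{GM}_\varphi)^2=\nabla^{GM}_{[\pa_\varphi,\pa_\varphi]}=0$, and applying $\nabla^{GM}_\varphi$ once more then forces the $\varphi$-coefficients to vanish, so $\nabla^{GM}_\varphi(s\Psi_i)=0$. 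After that, the matrix of $\nabla^{GM}_\tau$ in the basis $(s\Psi_1,s\Psi_2)$ has even entries, hence is independent of $\varphi$ and is determined by the classical Gauss--Manin computation on $S_0$.
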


\begin{proof}
First, we need to find for $s\Psi_1$ and $s\Psi_2$ the relative $1$-forms $\om_1,\om_2\in \Om^1_{X/S}(U_1)$ as in Proposition \ref{GM-recipe-prop}.

\begin{lemma}
We have a well-defined element 
$$\om_1=d\th\cdot (\th-\varphi\zeta_1)+(dz-d\th\cdot\th)(1-\th\varphi\zeta'_1)\in \Om^1_{X/S}(U_1),$$
such that $\de(\om_1)=s\Psi_1$ and $d_{X/S}(\om_1)=0$. 
Similarly, the element 
$$\om_2=d\th\cdot (\varphi\dot{\zeta}_1+\th\zeta'_1)+(dz-d\th\cdot \th)(\zeta'_1+\th\varphi\dot{\zeta}'_1)\in \Om^1_{X/S}(U_1),$$
satisfies $\de(\om'_1)=s\Psi_2$ and $d_{X/S}(\om_2)=0$.
\end{lemma}

\begin{proof} It is straightforward to check that $\om_i$ are invariant under the transformations \eqref{main-transformation} 
and satisfy the claimed equations.
\end{proof}

Next, we need liftings $\wt{\om}_i\in \Om^1_X(U_1)$ of $\om_i$, for $i=1,2$.
We can look for $\wt{\om}_1$ in the form
$$\wt{\om}_1=d\th\cdot (\th-\varphi\zeta_1)+(dz-d\th\cdot\th)(1-\th\varphi\zeta'_1)+d\varphi\cdot (\varphi A(z)+\th B(z))+d\tau\cdot (C(z)+\th\varphi D(z)),$$
Solving for the condition of invariance under $(z,\th)\mapsto (z+\tau+\th\varphi,\th+\varphi)$, we find a solution
$$\wt{\om}_1=d\th\cdot (\th-\varphi\zeta_1)+(dz-d\th\cdot\th)(1-\th\varphi\zeta'_1)+d\varphi\cdot \th \zeta_1-d\tau\cdot (\zeta_1+2\th\varphi\dot{\zeta}_1).$$
Now we compute the total differential (up to some terms that are not important):
$$d\wt{\om}_1\equiv [d\zeta_1+d\th\cdot \varphi\dot{\zeta}_1]\cdot d\tau
\mod((dz-d\th\cdot\th)\Om^1+\Om^2_S).
$$
Applying $\de\ot\id$ we get $s\Psi_2\ot d\tau$.

Similarly, we find 
$$\wt{\om}_2=d\th\cdot (\varphi\dot{\zeta}_1+\th\zeta'_1)+(dz-d\th\cdot \th)(\zeta'_1+\th\varphi\dot{\zeta}'_1)-d\varphi\cdot \th\dot{\zeta}_1+
d\tau(\dot{\zeta}_1+\th\varphi\ddot{\zeta}_1),$$
and the total differential is
$$d\wt{\om}_2\equiv 0 \mod((dz-d\th\cdot\th)\Om^1+\Om^2_S).$$

This finishes the proof of Theorem \ref{GM-thm}.
\end{proof}

Since $\om_{X/S}=s\cdot \OO_X$, we have $\pi_*\om_{X/S}\simeq \pi_*\OO_X$.
Thus the module of global differentials is not locally free over $\OO_S$ and is generated by
$$s, s\th\varphi=s\Psi_1\varphi.$$ 

\begin{cor}\label{per-cor}
$$e:=s\Psi_1-s\Psi_2\cdot \tau, \ f:=s\Psi_2$$
is a $\nabla^{GM}$-horizontal basis of $\HH^1_{X/S}$, extending the standard basis $(e_0,f_0)$ of $\HH^1_{C/S_0}$ 
(see \eqref{e0-f0-eq}).
We have the following relations in $\HH^1_{X/S}$:
\begin{equation}\label{per-rel1}
s\equiv s\Psi_2\varphi=f\varphi,
\end{equation}
%$$s\equiv s\Psi_1\varphi=e\varphi+f\varphi\tau,$$
\begin{equation}\label{per-rel2}
s\th\varphi=s\Psi_1\varphi=e\varphi+f\tau\varphi.
\end{equation}
\end{cor}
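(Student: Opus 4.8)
The plan is to combine Theorem \ref{GM-thm} with the basis change formulas so that the horizontal basis becomes transparent. First I would verify that $e$ and $f$ form an $\OO_S$-basis of $\HH^1_{X/S}$: since $(s\Psi_1, s\Psi_2)$ is an $\OO_S$-basis by the discussion preceding Theorem \ref{GM-thm}, and the transition matrix $\left(\begin{smallmatrix} 1 & -\tau \\ 0 & 1 \end{smallmatrix}\right)$ is invertible over $\OO_S$, so is $(e,f)$. Then horizontality is a direct computation from Theorem \ref{GM-thm}: applying $\nabla^{GM}_\varphi$ to $e = s\Psi_1 - \tau\cdot s\Psi_2$ gives $0$ because $\tau$ is $\nabla_\varphi$-flat (it is a coordinate on the even part) and both $s\Psi_1$, $s\Psi_2$ are $\nabla_\varphi$-horizontal; applying $\nabla^{GM}_\tau$ gives $\nabla^{GM}_\tau(s\Psi_1) - s\Psi_2 - \tau\cdot\nabla^{GM}_\tau(s\Psi_2) = s\Psi_2 - s\Psi_2 - 0 = 0$, using the Leibniz rule $\nabla^{GM}_\tau(\tau\cdot s\Psi_2) = s\Psi_2 + \tau\cdot\nabla^{GM}_\tau(s\Psi_2)$. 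Similarly $f = s\Psi_2$ is horizontal in both directions by Theorem \ref{GM-thm} directly.

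Next I would check the compatibility with the classical basis. Under the restriction isomorphism $i^*\HH^1_{X/S}\simeq \HH^1_{C/S_0}$ of \eqref{bos-H-iso}, which is compatible with Gauss-Manin (as noted after \eqref{bos-H-iso}), I set $\varphi=0$ and use the identification $s\cdot f\mapsto f\,dz$ coming from $s=\de(\th)$ and $\de(g)=\de(\th)\cdot D(g)$. At $\varphi=0$ we have $\Psi_1=\th$ and $\Psi_2 = \th\zeta_1'$, so modulo the odd variable the relevant even differentials are represented by the classes of $dz$ and $\zeta_1'(z,\tau)\,dz$ in $\HH^1_{C/S_0}$; comparing with \eqref{e0-f0-eq} we get $e|_{\varphi=0} = (1-\tau\zeta_1')dz = e_0$ and $f|_{\varphi=0} = \zeta_1'\,dz = f_0$. (Here one should be slightly careful about how the odd function $s\Psi_2 = s\th\zeta_1'$ restricts; I would spell out that under the bosonization the even part of $\om_{X/S}(X-p)$ modulo $\de$ is what computes $\HH^1_{C/S_0}$, and $s\th$ pairs to the constant function generating a $1$-dimensional piece — this is exactly the mechanism by which $\Psi_1,\Psi_2$, though built from $\th$, contribute to the even cohomology. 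The cleanest route is to note $\de$ of the Čech lift and read off the residue as in Proposition \ref{H1-prop}.)

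Finally I would establish the two relations \eqref{per-rel1} and \eqref{per-rel2} in $\HH^1_{X/S}$. For \eqref{per-rel1}: from the first formula in \eqref{D-Psi-eq}, $D(\Psi_1) = 1 + \varphi\Psi_2$, hence applying $\de(g)=s\cdot D(g)$ we get $s + s\varphi\Psi_2 = \de(\Psi_1)\in \de(\OO(X-p))$, so $s \equiv -s\Psi_2\varphi$ in the cokernel; a sign check against the stated $s\equiv s\Psi_2\varphi$ — since $\varphi$ is odd and $\Psi_2$ odd, moving $\varphi$ past $s\Psi_2$ may flip a sign, and I would track the parity bookkeeping so the displayed relation comes out as written — then gives $s \equiv f\varphi$. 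For \eqref{per-rel2}: substitute $e = e\varphi$... rather, express $s\Psi_1\varphi$ in terms of $e,f$: from $s\Psi_1 = e + \tau f$ we get $s\Psi_1\varphi = e\varphi + f\tau\varphi$ (with $\tau$ even it commutes freely), and $s\th\varphi = s\Psi_1\varphi$ because $\Psi_1 = \th - \varphi\zeta_1$ and $\varphi^2=0$ kills the $\varphi\zeta_1$ term after multiplying by $\varphi$. The main obstacle I anticipate is purely the sign/parity bookkeeping in \eqref{per-rel1}: getting the $\de$-exactness to land on exactly $s\Psi_2\varphi$ rather than its negative, and making sure the Leibniz rule for $\nabla^{GM}$ is applied with the correct Koszul signs when $\tau$ (even) multiplies the odd-looking but even-cohomology class $s\Psi_2$. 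None of this is deep, but it is where an error would hide.
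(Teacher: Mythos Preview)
Your proposal is correct and follows the same route as the paper, which treats the corollary as immediate from Theorem \ref{GM-thm} and \eqref{D-Psi-eq} (the paper only remarks that \eqref{per-rel1} follows from \eqref{D-Psi-eq}). Your sign worry in \eqref{per-rel1} resolves cleanly: from $\de(\Psi_1)=s(1+\varphi\Psi_2)$ you get $s\equiv -s\varphi\Psi_2$, and since $\varphi\Psi_2=-\Psi_2\varphi$ (both odd) this is exactly $s\equiv s\Psi_2\varphi=f\varphi$; for the restriction to $S_0$ the cleanest bookkeeping is to use the closed $1$-form lifts $\om_1,\om_2$ from the proof of Theorem \ref{GM-thm}, which at $\varphi=0$ reduce to $dz$ and $\zeta_1'dz$ respectively.
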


Note that the relation \eqref{per-rel1} follows from \eqref{D-Psi-eq}.

\end{document}